\newtheorem{theorem}{Theorem}[section]
\newtheorem{lemma}[theorem]{Lemma}
\newtheorem{corollary}[theorem]{Corollary}
\newtheorem{remark}[theorem]{Remark}
\newtheorem{definition}[theorem]{Definition}
\newtheorem{example}[theorem]{Example}
\numberwithin{equation}{section}
\def\blfootnote{\xdef\@thefnmark{}\@footnotetext}
\def\p{\partial}  \def\ora{\overrightarrow}
\def\ol{\overline}		\def\m{\mathbb}		
\def\O{\Omega}  \def\lam{\lambda}  \def\eps{\epsilon}  \def\a{\alpha}	\def\b{\beta}
	\def\wt{\widetilde}
\def\ls{\lesssim}
\def\be{\begin{equation}}     \def\ee{\end{equation}}
\title{Application of the Neumann heat kernel to prevent the finite time blowup}
\author{Xin Yang and Zhengfang Zhou}
\date{}
\begin{document}

\title{Prevention of blowup via Neumann heat kernel}
\author[]{Xin Yang and Zhengfang Zhou}
\date{}
\maketitle

\begin{abstract}
Consider the heat equation $u_t-\Delta u=0$ on a bounded $C^2$ domain $\O$ in $\m{R}^{n}(n\geq 2)$ with any positive initial data. If a superlinear radiation law $\frac{\p u}{\p n}=u^{q}$ with $q>1$ is imposed on a partial boundary $\Gamma_1\subseteq\p\O$ which has a positive surface area, then it has been known that the solution $u$ blows up in finite time. However, if the partial boundary, on which the superlinear radiation law is prescribed, is shrinking and is denoted as $\Gamma_{1,t}$ at time $t$, then the solution may exist globally  as long as the surface area $|\Gamma_{1,t}|$ of $\Gamma_{1,t}$ decays fast enough. This paper asks the question that how fast should $|\Gamma_{1,t}|$ decay in order to have a bounded global solution? This question is of significant importance in realistic situations, such as the temperature control within a certain safe range. By taking advantage of the Neumann heat kernel, we conclude that a polynomial decay $|\Gamma_{1,t}|\sim |\Gamma_1|(1+Ct)^{-\beta}$ with any $\beta>n-1$ suffices to ensure a bounded global solution. 
\end{abstract}

\bigskip
\bigskip

\blfootnote{2010 Mathematics Subject Classification. 35A01; 35B44; 35K20; 35K08; 35C15.}

\blfootnote{Key words and phrases. Global existence; Bounded solutions; Neumann heat kernel; Representation formula.}


\section{Introduction}
\subsection{Historical works}
Since the pioneering papers by Kaplan \cite{Kap63} and Fujita\cite{Fuj66}, the blow-up phenomenon of parabolic equations has been extensive studied in the literature for the Cauchy problems as well as the boundary value problems. We refer the readers to the surveys \cite{DL00, Lev90}, the books \cite{Hu11, QS07} and the references therein. One of the typical problems is the heat equation with Neumann boundary conditions in a bounded domain $\O$ of $\m{R}^{n}$:
\be\label{heat with Neumann}
\left\{\begin{array}{lll}
(\p_{t}-\Delta_{x})u(x,t)=0 &\text{in}& \Omega\times (0,T], \vspace{0.02in}\\
\dfrac{\partial u(x,t)}{\partial n(x)}=F\big(u(x,t)\big) &\text{on}& \p\O\times (0,T], \vspace{0.02in}\\
u(x,0)=\psi(x) &\text{in}& \Omega,
\end{array}\right.\ee
where $\frac{\p u}{\p n}$ denotes the exterior normal derivative and $F$ is a smooth function. This equation is used to model the heat conduction problem with a radiation law prescribed on the boundary of the material body. The local well-posedness of (\ref{heat with Neumann}) has been studied very well (see e.g. \cite{L-GMW91, Fri64, Lie96, LSU68}). Moreover, if $F$ is bounded on $\m{R}$, then the local solution can be extended globally (see e.g. \cite{L-GMW91}). However, if $F$ is unbounded, then the finite time blowup of the local solution may occur (see e.g. \cite{LP74, Wal75, RR97, L-GMW91, HY94}). Similarly, people also studied more general parabolic equations  and investigated whether there is finite time blowup. Once the finite time blowup happens, it is of great importance to estimate the lifespan (maximal existence time) of the solution. 
There has been developed various methods to deal with the upper bound of the lifespan (see \cite{Lev75} for a list of six methods). The lower bound estimate appeared much later but also draw much attention recently (see e.g. \cite{Kap63,PS06,PS07,PS09,Ena11, PPV-P10a, PPV-P10b, PP13, BS14, LL12, TV-P16, AD17, DS16, NY19}). The main approach in these works was the energy method. 

As a more general consideration for the realistic problems, the radiation may only occur on a small portion of the boundary. In other words, the nonlinear Neumann boundary condition $\frac{\p u}{\p n}=F(u)$ may be only imposed on a small partial boundary $\Gamma_{1}\subsetneqq \p\O$, while $\frac{\p u}{\p n}=0$ on the rest of the boundary $\Gamma_{2}$. Taking \cite{YZ16} as an example, it studied the disaster of the Space Shuttle Columbia, for which the heat radiation only occured on partial boundary $\Gamma_1$ (see Figure \ref{Fig, Columbia}) due to the damage there. 
\begin{figure}[!ht]
\centering
\begin{tikzpicture}[scale=0.6]
\begin{large}
\draw (-5,1/2)-- (0,1/2);
\draw (0,1/2)--(5/2,7/2);
\draw [domain=5/2:3] plot ({\x},{7/2+1/16-(\x-11/4)^2});
\draw (3,7/2)--(3,1/2);
\draw (3,1/2)--(4,1/2);
\draw (-5,-1/2)-- (0,-1/2);
\draw (0,-1/2)--(5/2,-7/2);
\draw [domain=5/2:3] plot ({\x},{-7/2-1/16+(\x-11/4)^2});
\draw (3,-7/2)--(3,-1/2);
\draw (3,-1/2)--(4,-1/2);
\draw [domain=90:270] plot ({cos(\x)-5},{1/2*sin(\x)});
\draw [dashed] [domain=0:360] plot({-5+1/8*cos(\x)},{1/2*sin(\x)});
\draw [dashed] [domain=0:360] plot({4+1/8*cos(\x)},{1/2*sin(\x)});
\path (-1.5,2.2) coordinate (A);
\draw (A) node [above] {$u$: temperature};
\draw [color=blue] [domain=110:150] plot({5/4+3/4*cos(\x)},{-2+3/4*sin(\x)});
\draw [color=blue] [domain=290:330] plot({5/4+3/4*cos(\x)},{-2+3/4*sin(\x)});
\path (5/4,-2+1/4) coordinate (B);
\draw (B) node [right] {$\Gamma_1$};
\path (5/4+1/4,2-1/4) coordinate (D);
\draw (D) node [below] {$\Omega$};
\path (-2,1/2) coordinate (F);
\draw (F) node [above] {$\Gamma_2$};
\path (1/2,-5/2) coordinate (E);
\end{large}
\end{tikzpicture}
\caption{Space Shuttle Columbia}
\label{Fig, Columbia}
\end{figure}
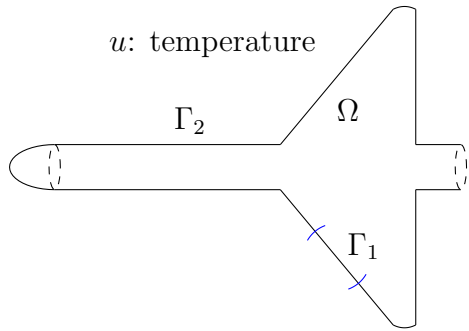
We refer the reader to that paper for the details of the background. Then it is natural to ask the following questions.
\begin{itemize}
\item [(1)] Will the finite time blowup still happen?
\item [(2)]If the finite time blowup occurs, then how does the lifespan depend on the size of $\Gamma_{1}$? 
\end{itemize}
In \cite{YZ16,YZ18,YZ19}, the authors investigated these questions for $F$ being a power function (see (\ref{Prob})) and they quantified both upper and lower bounds of the lifespan in terms of the surface area of $\Gamma_{1}$. Next, we will briefly summarize their main results.

Let $\Omega$ be a bounded open subset in $\m{R}^{n}$ ($n\geq 2$) with $C^{2}$ boundary $\partial\Omega$. \cite{YZ16} studied the following problem.
\be\label{Prob}
\left\{\begin{array}{rllll}
u_{t}(x,t)&=& \Delta u(x,t) &\text{in}& \Omega\times (0,T], \vspace{0.04in}\\
\dfrac{\partial u(x,t)}{\partial n(x)}&=& u^{q}(x,t) &\text{on}& \Gamma_1\times (0,T], \vspace{0.04in}\\
\dfrac{\partial u(x,t)}{\partial n(x)}&=& 0 &\text{on}& \Gamma_2\times (0,T], \vspace{0.04in}\\
u(x,0)&=& u_0(x) &\text{in}& \Omega,
\end{array}\right.\ee
where $\Gamma_1$ and $\Gamma_2$ are two disjoint open subsets of $\p\O$ with $\Gamma_1\neq\emptyset$, $\ol{\Gamma}_{1}\cup\ol{\Gamma}_{2}=\p\O$, and
\be\label{assumption on prob}
q>1,\, u_0\in C^{1}(\ol{\O}),\, u_0(x)\geq 0,\, u_0(x)\not\equiv 0. \ee
In addition, 
\[\wt{\Gamma}:= \ol{\Gamma}_1\cap\ol{\Gamma}_2\]
is a common $C^1$ boundary of $\Gamma_1$ and $\Gamma_2$. The normal derivative in (\ref{Prob}) is understood in the following way: for any $(x,t)\in\p\O\times(0,T]$,
\be\label{def of normal deri}
\frac{\p u(x,t)}{\p n(x)}:= \lim_{h\rightarrow 0^{+}} (Du)(x_h,t)\cdot\ora{n}(x),\ee
where $\ora{n}(x)$ denotes the exterior unit normal vector at $x$ and $x_h:= x-h\ora{n}(x)$ for $x\in\p\O$. According to \cite{YZ16}, a solution to (\ref{Prob}) up to time $T$ is defined as follows.
\begin{definition}\label{Def, soln to prob}(\cite{YZ16})
For $T>0$, a solution to (\ref{Prob}) on $\ol{\O}\times[0,T]$ means a function $u\in C^{2,1}(\O\times(0,T])\cap C(\overline{\O}\times[0,T])$ which has the following two properties.
\begin{enumerate}
\item [(1)] $u$ satisfies (\ref{Prob}) pointwise in the classical sense.
\item [(2)] For any $(x,t)\in \wt{\Gamma}\times (0,T]$, $\frac{\p u}{\p n}(x,t)$ exists and
\be\label{interface bdry deri}
\frac{\p u}{\p n}(x,t)=\frac{1}{2}\,u^{q}(x,t). \ee
\end{enumerate}
\end{definition}

\begin{definition}\label{Def, maximal soln}(\cite{YZ16})
The lifespan $T^{*}$ of (\ref{Prob}) is defined as
\[T^{*}:= \sup\big\{T\geq 0:\,\text{there exsits a solution to (\ref{Prob}) on}\,\,\, \ol{\O}\times[0,T]\big\}.\]
A function is called a maximal solution to (\ref{Prob}) if it solves (\ref{Prob}) up to the lifespan $T^{*}$.
\end{definition}

Based on these definitions, \cite{YZ16} concluded that $T^{*}$ is finite and positive as long as $|\Gamma_{1}|>0$, where $|\Gamma_1|$ denotes the surface area of $\Gamma_1$. In addition, there exists a unique nonnegative maximal solution to (\ref{Prob}). Moreover, if $\min\limits_{\ol{\O}}u_{0}>0$, then
\be\label{upper bdd}
T^{*}\leq \frac{1}{(q-1)|\Gamma_1|}\int_{\O}u_0^{1-q}(x)\,dx.\ee
Later in \cite{YZ18}, by denoting
\be\label{initial max}
M_0= \max_{x\in\ol{\O}}u_0(x),\ee
it provides a lower bound for $T^{*}$:
\be\label{lower bdd, old}
T^{*}\geq \frac{C}{q-1}\ln\Big(1+(2M_{0})^{-4(q-1)}\,|\Gamma_{1}|^{-\frac{2}{n-1}}\Big),\ee
where the constant $C$ only depends on $n$ and $\O$. It is worth mentioning that the lower bound (\ref{lower bdd, old}) does not require the convexity assumption on the domain $\O$, although this assumption was commonly seen in the previous works addressing the lower bound estimate. Combining (\ref{upper bdd}) and (\ref{lower bdd, old}) together, the asymptotic behavior of $T^{*}$ on $q$ is clear as $q\rightarrow 1^+$. That is: 
\[T^{*}\sim \frac{1}{q-1} \quad\text{as}\quad q\rightarrow 1^+.\]
But the asumptotic behavior of $T^{*}$ on $|\Gamma_1|$ as $|\Gamma_1|\rightarrow 0^+$ is far from satisfaction. In order to obtain more precise characterization, \cite{YZ19} took advantage of the Neumann heat kernel to achieve the following conclusion as $|\Gamma_{1}|\rightarrow 0^+$.
\begin{itemize}
\item If $n=2$, then 
\be\label{lifespan on Gamma_1, 2d}
|\Gamma_1|^{-1}\ln\big(|\Gamma_1|^{-1}\big)\ls T^{*}\ls |\Gamma_1|^{-1}.\ee
\item If $n\geq 3$, then
\be\label{lifespan on Gamma_1, hign dim}
|\Gamma_1|^{-\frac{1}{n-1}}\ls T^{*}\ls |\Gamma_1|^{-1}.\ee
\end{itemize}
In the two dimensional case, up to a logarithmic order, the order of $T^{*}$ is just $|\Gamma_1|^{-1}$. But in the higher dimensional cases, it is still an open question about the sharp order of $T^{*}$ on $|\Gamma_1|$ as $|\Gamma_1|\rightarrow 0^+$.

\subsection{Current problem and main results}
This paper focuses on a different aspect. In reality, the finite time blow-up is very dangerous, so rather than estimaing the blow-up time, it may be more desirable to take actions to prevent the blow-up. This paper will discuss one such possible way by repairing the heat radiation boundary. It turns out that as long as the surface area of the heat radiation boundary is decaying at some polynomial order, the temperature can be kept under a certain value (safe temperature). 

Let $\O$, $\Gamma_1$ and $\Gamma_2$ be the same as those in equation (\ref{Prob}). Let $\Gamma_{1,t}$ and $\Gamma_{2,t}$ be two boundary parts which are evolved from $\Gamma_{1}$ and $\Gamma_{2}$ at time $t$ such that the following three properties hold.
\begin{itemize}
\item[(A)] For any $t\geq 0$, $\Gamma_{1,t}$ and $\Gamma_{2,t}$ are two disjoint relatively open subsets of $\p\O$. Moreover, $\Gamma_{1,t}$ and $\Gamma_{2,t}$ share the common $C^1$ boundary $\wt{\Gamma}_t$, defined as in (\ref{common bdry}), such that $\p\O=\Gamma_{1,t}\cup\Gamma_{2,t}\cup\wt{\Gamma}_{t}$. 
\be\label{common bdry}
\p\Gamma_{1,t}=\p\Gamma_{2,t}:= \wt{\Gamma}_{t}.\ee

\item[(B)] There exists a continuous bijection $\Psi:\p\O\times[0,\infty)\rightarrow \p\O\times[0,\infty)$ such that for any $t\geq 0$, $\Psi(\Gamma_1\times\{t\})= \Gamma_{1,t}\times\{t\}$, $\Psi(\Gamma_2\times\{t\}) = \Gamma_{2,t}\times\{t\}$ and $\Psi(\wt{\Gamma}\times\{t\})= \wt{\Gamma}_{t}\times\{t\}$.

\item[(C)] $\Gamma_{1,t}$ is shrinking as $t$ is increasing, namely  $\Gamma_{1,t_{1}}\subseteq\Gamma_{1,t_{2}}$ if $t_{1}\geq t_{2}$.
\end{itemize}

\begin{example}
Let $\O$ be the unit ball in $\m{R}^{n}(n\geq 2)$: $\O=\{x\in\m{R}^{n}: |x|<1\}$.
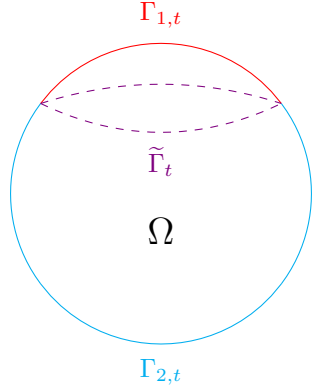
\begin{figure}[ht!]
\begin{center}
\begin{tikzpicture}[node distance=4]
\draw[domain=-1.6:1.6][red] plot (\x, {sqrt(4-(\x)^2)});
\node at (0,2.08) [above][red] {$\Gamma_{1,t}$};

\draw[domain=-1.2:1.2][cyan] plot ({sqrt(4-(\x)^2)}, \x);
\draw[domain=-1.2:1.2][cyan] plot ({-sqrt(4-(\x)^2)}, \x);
\draw[domain=-1.6:1.6][cyan] plot (\x, {-sqrt(4-(\x)^2)});
\node at (0,-2.08) [below][cyan] {$\Gamma_{2,t}$};

\draw[domain=-1.6:1.6][dashed][violet] plot (\x, {1.2+0.15*((\x)^2-2.56)});
\draw[domain=-1.6:1.6][dashed][violet] plot (\x, {1.2-0.1*((\x)^2-2.56)});
\node at (0,0.75) [below][violet] {$\wt{\Gamma}_{t}$};

\node at (0,-0.2) [below] {\Large $\Omega$};
\end{tikzpicture}
\caption{An Example of $\Gamma_{1,t}$ and $\Gamma_{2,t}$}
\label{Fig, region}
\end{center}
\end{figure}
For any point $x$ in $\m{R}^n$, we write it as $x=(\tilde{x},x_n)$, where $\tilde{x}\in\m{R}^{n-1}$ and $x_n\in\m{R}$. Define (see Figure \ref{Fig, region})
\[\Gamma_{1,t} = \big\{(\tilde{x},x_n): |\tilde{x}|<e^{-t},\, x_n=\sqrt{1-|\tilde{x}|^2}\big\}\]
and
\[\Gamma_{2,t} =\Big\{(\tilde{x},x_n):e^{-t}<|\tilde{x}|<1,\, x_n=\sqrt{1-|\tilde{x}|^2}\Big\} \bigcup \Big\{(\tilde{x},x_n): |\tilde{x}|\leq 1,\, x_n=-\sqrt{1-|\tilde{x}|^2}\Big\}.\]
Then 
\[\p\Gamma_{1,t}=\p\Gamma_{2,t}=\big\{(\tilde{x},x_n): |\tilde{x}|=e^{-t},\, x_n=\sqrt{1-|\tilde{x}|^2}\big\}:=\wt{\Gamma}_{t}.\]
It is clear that all the assumptions $(A)$, $(B)$ and $(C)$ are satisfied.

\end{example}

Based on the above notations, for any $T>0$, we decompose the lateral boundary $\p\O\times(0,T]$ into three parts: $S_{0,T}$, $S_{1,T}$ and $S_{2,T}$.
\be\label{bdry}\begin{split}
S_{1,T} &=\{(x,t): t\in(0,T],\,x\in\Gamma_{1,t}\},\\
S_{2,T} &=\{(x,t): t\in(0,T],\,x\in\Gamma_{2,t}\},\\
S_{0,T} &=\{(x,t): t\in(0,T],\,x\in\wt{\Gamma}_{t}\}.
\end{split}\ee
Then the problem (\ref{Prob, vary bdry}) below will be studied. 
\be\label{Prob, vary bdry}
\left\{\begin{array}{lrlll}
u_{t}(x,t) &=& \Delta u(x,t) &\text{in}& \Omega\times (0,T],\vspace{0.04in}\\
\dfrac{\partial u(x,t)}{\partial n(x)} &=& u^{q}(x,t) &\text{on}& S_{1,T},\vspace{0.04in}\\
\dfrac{\partial u(x,t)}{\partial n(x)} &=& 0 &\text{on}& S_{2,T},\vspace{0.04in}\\
u(x,0) &=& u_0(x) &\text{in}& \Omega.
\end{array}\right.\ee
We use $|\Gamma_{1}|$ and $|\Gamma_{1,t}|$ to denote the surface areas of $\Gamma_{1}$ and $\Gamma_{1,t}$ respectively, that is, 
\[|\Gamma_{1}|=\int_{\Gamma_{1}}\,dS(x)\quad\text{and}\quad |\Gamma_{1,t}|=\int_{\Gamma_{1,t}}\,dS(x),\]
where $dS(x)$ means the surface integral with respect to the variable $x$. For convenience of notation, we define
\be\label{area fn}
A(t)=|\Gamma_{1,t}|.\ee
In this paper, it is assumed that $A(t)=|\Gamma_1|f(t)$, where $f$ is a decreasing function from  $[0,\infty)$ to $(0,1]$ with $f(0)=1$. Then we are asking how fast $f(t)$ should decrease to prevent $u$ blowing up in finite time or to prevent $u$ exceeding a certain value (say a safe temperature). Should $f(t)$ decay exponentially like $f(t)\sim |\Gamma_1|e^{-Ct}$? Or should a polynomial decay like $f(t)\sim (1+Ct)^{-\beta}$ be enough? If a polynomial decay suffices, then how large the decay power $\beta$ is needed? We will answer these questions in Theorem \ref{Thm, prevent finite time blow-up} and Theorem \ref{Thm, control temp}. Roughly speaking, a polynomial decay with power $\beta>n-1$ fulfills our expectations.

Similar to Definition \ref{Def, soln to prob} and Definition \ref{Def, maximal soln} for the problem (\ref{Prob}), the local solution to (\ref{Prob, vary bdry}) and its lifespan are understood in the sense of Definition \ref{Def, soln to prob with vary bdry} and Definition \ref{Def, maximal soln with vary bdry}. 
\begin{definition}\label{Def, soln to prob with vary bdry}
For any $T>0$, a solution to (\ref{Prob, vary bdry}) on $\ol{\O}\times[0,T]$ means a function $u\in C^{2,1}(\O\times(0,T])\cap C(\overline{\O}\times[0,T])$ which has the following two properties.
\begin{enumerate}
\item [(1)] $u$ satisfies (\ref{Prob, vary bdry}) pointwise in the classical sense.
\item [(2)] For any $(x,t)\in S_{0,T}$, $\frac{\p u}{\p n}(x,t)$ exists and
\be\label{interface bdry deri. for simple model}
\frac{\p u}{\p n}(x,t)=\frac{1}{2}\,u^{q}(x,t). \ee
\end{enumerate}
\end{definition}

\begin{definition}\label{Def, maximal soln with vary bdry}
The lifespan $T^{*}$ of (\ref{Prob, vary bdry}) is defined as
\[T^{*}:= \sup\big\{T\geq 0:\,\text{there exsits a solution to (\ref{Prob, vary bdry}) on}\,\,\, \ol{\O}\times[0,T]\big\}.\]
A function is called a maximal solution to (\ref{Prob, vary bdry}) if it solves (\ref{Prob, vary bdry}) up to the lifespan $T^{*}$.
\end{definition}

Throughout this paper, we define $M_0$ as in (\ref{initial max}) to be the maximum of the initial data. In addition, we denote $M(t)$ to be the supremum of the solution $u$ to (\ref{Prob, vary bdry}) on $\ol{\O}\times[0,t]$:
\be\label{max function at time t}
M(t)=\sup_{(x,\tau)\in \ol{\O}\times[0,t]}u(x,\tau).\ee
Based on the proofs of Theorem 1.3 and Corollary 1.1 in \cite{L-GMW91}, and Appendix B in \cite{YZ16}, we are able to derive the fundamental result on the existence and uniqueness of the solution to (\ref{Prob, vary bdry}).

\begin{theorem}\label{Thm, fund thm for vary bdry}
Assume (\ref{assumption on prob}), (A), (B) and (C) hold. Then the lifespan $T^{*}$ of (\ref{Prob, vary bdry}) is positive (possibly infinity) and there exists a unique maximal solution $u\in C^{2,1}\big(\O\times(0,T^{*})\big)\cap C\big(\ol{\O}\times[0,T^{*})\big)$ to (\ref{Prob, vary bdry}). Moreover, $u(x,t)>0$ for any $(x,t)\in\ol{\O}\times(0,T^{*})$. Finally, if $T^{*}<\infty$, then 
\be\label{bdd soln can extend}
\lim\limits_{t\nearrow T^{*}}M(t)=\infty,\ee
where $M(t)$ is defined as in (\ref{max function at time t}).
\end{theorem}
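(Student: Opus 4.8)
The plan is to adapt the local well-posedness machinery for semilinear Neumann problems, following the proofs of Theorem~1.3 and Corollary~1.1 in \cite{L-GMW91} and Appendix~B in \cite{YZ16}, the new feature being the time-dependent partition $\p\O=\Gamma_{1,t}\cup\Gamma_{2,t}\cup\wt{\Gamma}_t$. First I would recast (\ref{Prob, vary bdry}) as a fixed-point problem for the Duhamel equation associated with the Neumann heat kernel $N(x,y,t)$ of $\O$:
\[
u(x,t)=\int_{\O}N(x,y,t)\,u_0(y)\,dy+\int_0^{t}\!\!\int_{\p\O}N(x,y,t-s)\,g(y,s)\,dS(y)\,ds,\qquad g(y,s):=\mathbf{1}_{\Gamma_{1,s}}(y)\,u^{q}(y,s).
\]
The Gaussian bounds for $N$ together with the boundary estimate $\int_{\p\O}N(x,y,\tau)\,dS(y)\ls\tau^{-1/2}$ make the right-hand side a contraction on a ball of $C(\ol\O\times[0,\tau])$ for $\tau$ small, depending only on an $L^\infty$ bound of the datum, $n$, $q$ and $\O$; the only role of the moving boundary here is through the indicator $\mathbf{1}_{\Gamma_{1,s}}$, whose $s$-dependence is tame because of the continuous bijection $\Psi$ in (B), so that the boundary integral is jointly continuous in $(x,t)$ and the fixed point belongs to $C(\ol\O\times[0,\tau])$. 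Interior parabolic regularity and local Schauder estimates on the smooth parts of $\p\O$ then upgrade $u$ to $C^{2,1}(\O\times(0,\tau])$, uniqueness on $[0,\tau]$ comes from the contraction, and the standard continuation argument defines $T^{*}$ as in Definition~\ref{Def, maximal soln with vary bdry} together with the maximal solution.

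The genuinely delicate step is to check that this $u$ satisfies the boundary and interface conditions in the classical pointwise sense, in particular the half-flux relation (\ref{interface bdry deri. for simple model}) on $\wt{\Gamma}_t$. This is where the $C^1$-regularity of $\wt{\Gamma}_t$ assumed in (A) is used: the jump relations for the single-layer-type potential $\int_0^{t}\!\int_{\p\O}N(x,y,t-s)\,g\,dS\,ds$ show that, as $x$ tends to a boundary point along the inward normal, its normal derivative converges to $g$ at points of continuity of $g(\cdot,s)$, and to the average of the two one-sided traces at a point where $g(\cdot,s)$ jumps across a $C^1$ hypersurface; on $\wt{\Gamma}_t$ those one-sided traces are $u^{q}$ and $0$, producing $\tfrac12 u^{q}$. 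This is exactly the computation of Appendix~B of \cite{YZ16} for a fixed interface, and the only additional work is to perform it uniformly as $\wt{\Gamma}_t$ sweeps through $\p\O$, which is legitimate since $\Psi$ is a continuous bijection and $\wt{\Gamma}_t$ remains $C^1$.

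Positivity is then routine: $v\equiv0$ is a subsolution of (\ref{Prob, vary bdry}) since $\p v/\p n=0\le v^{q}$ on $S_{1,T}$, so the comparison principle yields $u\ge0$; because $u_0\not\equiv0$, the strong maximum principle forces $u>0$ in $\O\times(0,T^{*})$, and if $u$ vanished at some $(x_0,t_0)\in\p\O\times(0,T^{*})$ then Hopf's boundary point lemma would give $\p u/\p n(x_0,t_0)<0$, contradicting the boundary/interface conditions, which make this quantity equal to $u^{q}(x_0,t_0)=0$ or $\tfrac12 u^{q}(x_0,t_0)=0$. Hence $u>0$ on $\ol\O\times(0,T^{*})$. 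Finally, for the blow-up alternative, assume $T^{*}<\infty$ and $M(t)\le K$ for all $t<T^{*}$; then $g$ is bounded by $K^{q}$ on $\p\O\times(0,T^{*})$, and the Duhamel formula together with parabolic estimates bound $u$ and its derivatives uniformly on $\ol\O\times[T^{*}/2,T^{*})$ with a modulus of continuity up to $t=T^{*}$, so $u(\cdot,t)$ converges in $C(\ol\O)$ to some $u^{*}$ as $t\nearrow T^{*}$. Taking $u^{*}$ as new initial datum and invoking the local existence step, whose existence time depends only on $\|u^{*}\|_{\infty}\le K$, extends the solution beyond $T^{*}$, contradicting the definition of $T^{*}$; therefore $\lim_{t\nearrow T^{*}}M(t)=\infty$.

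I expect the main obstacle to be making the jump-relation step uniform in $t$: one must control the dependence of the layer-potential estimates on the local geometry of $\Gamma_{1,s}$ near the moving interface $\wt{\Gamma}_s$, so that the classical boundary and interface identities hold for every $t\in(0,T^{*})$ and not merely for almost every $t$; the remaining ingredients are a direct adaptation of \cite{L-GMW91} and \cite{YZ16}.
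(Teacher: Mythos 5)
Your proposal follows essentially the same route as the paper, which does not write out a detailed proof but derives the theorem directly from Theorem~1.3 and Corollary~1.1 of \cite{L-GMW91} together with the interface analysis in Appendix~B of \cite{YZ16}; your sketch (Duhamel fixed point with the Neumann heat kernel, the layer-potential jump relation giving the factor $\tfrac12$ on $\wt{\Gamma}_t$, comparison plus Hopf for positivity, and continuation for the blow-up alternative) fills in exactly the ingredients those references supply, with the moving partition handled through assumptions (A)--(C) as intended. No genuine gap beyond the uniformity-in-$t$ point you already flag, which is the same adaptation the paper implicitly relies on.
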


Theorem \ref{Thm, fund thm for vary bdry} states that the lifespan of (\ref{Prob, vary bdry}) is just the blow-up time of the supremum norm of its solution $u$. So we will also call $T^{*}$ to be the blow-up time. Moreover, this theorem indicates that in order to obtain a global solution, one just needs to ensure the solution to be bounded at any finite time $T$. Based on this observation, we will show that the finite time blow-up can not happen as long as the surface area decays as (\ref{surface area decay, global soln}). As a convention of the notations, $C=C(a,b\dots)$ and $C_{i}=C_{i}(a,b\dots)$ in this paper will stand for positive constants which only depend on the parameters $a,b\dots$. In addition, $C$ and $C_i$ may represent different constants from line to line.

\begin{theorem}\label{Thm, prevent finite time blow-up}
Assume (\ref{assumption on prob}), (A), (B) and (C) hold. Let $T^{*}$ be the lifespan of (\ref{Prob, vary bdry}). Define $M_{0}$ as in (\ref{initial max}). Then for any $\beta>n-1$, there exists $C^{*}=C^{*}(n,\O, q, \beta, M_{0}, |\Gamma_{1}|)$ such that if 
\be\label{surface area decay, global soln}
A(t)\leq |\Gamma_{1}|(1+C^{*}t)^{-\beta},\ee
then $T^{*}=\infty$. 
\end{theorem}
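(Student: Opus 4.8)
The plan is to combine the Neumann heat kernel representation of the solution with a continuity (bootstrap) argument, so that everything reduces to a single smallness estimate for a space--time integral of the kernel against the shrinking radiation boundary. Writing $N(x,y,t)$ for the Neumann heat kernel of $\O$, I would first establish the representation formula: applying Green's second identity to $N(x,y,t-\tau)$ and $u(y,\tau)$ on $\O$, integrating in $\tau$, and using $\frac{\p N}{\p n(y)}=0$ on $\p\O$ together with the boundary conditions of (\ref{Prob, vary bdry}) (the interface $\wt{\Gamma}_{\tau}$ has surface measure zero, so (\ref{interface bdry deri. for simple model}) is immaterial here), one gets
\be\label{rep in proof}
u(x,t)=\int_{\O}N(x,y,t)\,u_{0}(y)\,dy+\int_{0}^{t}\!\!\int_{\Gamma_{1,\tau}}N(x,y,t-\tau)\,u^{q}(y,\tau)\,dS(y)\,d\tau .
\ee
Since $N\geq 0$ and $\int_{\O}N(x,y,t)\,dy=1$, the first term is $\leq M_{0}$; and since $M(\cdot)$ in (\ref{max function at time t}) is nondecreasing, bounding $u^{q}(y,\tau)\leq M(t)^{q}$ gives, for all $t$ in the lifespan and all $x\in\ol\O$,
\be\label{key ineq in proof}
u(x,t)\leq M_{0}+M(t)^{q}\,J,\qquad J:=\sup_{t>0}\ \sup_{x\in\ol\O}\ \int_{0}^{t}\!\!\int_{\Gamma_{1,\tau}}N(x,y,t-\tau)\,dS(y)\,d\tau .
\ee

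The central point is that $J$ can be made smaller than any prescribed $\eps>0$ by taking $C^{*}$ large. I would use the standard Neumann heat kernel bounds on a bounded $C^{2}$ domain: $N(x,y,s)\leq C_{0}s^{-n/2}e^{-c_{0}|x-y|^{2}/s}$ for $0<s\leq1$, $N(x,y,s)\leq C_{0}$ for $s\geq1$, and hence, after integrating the Gaussian over the $(n-1)$-dimensional boundary, $\sup_{x}\int_{\p\O}N(x,y,s)\,dS(y)\leq C_{0}s^{-1/2}$ for $0<s\leq1$; here $C_{0},c_{0}$ depend only on $n,\O$. Substituting $s=t-\tau$ and estimating $\int_{\Gamma_{1,\tau}}N(x,y,s)\,dS(y)$ by the minimum of $C_{0}s^{-n/2}A(\tau)$ (from the sup bound and $|\Gamma_{1,\tau}|=A(\tau)$) and $C_{0}s^{-1/2}$ (from the boundary-integral bound) when $s\leq1$, and by $C_{0}A(\tau)$ when $s\geq1$, I would split the integral at a small level $\delta\in(0,1)$: the part with $s\in(0,\delta)$ contributes $\leq2C_{0}\sqrt{\delta}$, and the part with $s\geq\delta$ contributes $\leq C_{0}(\delta^{-n/2}+1)\int_{0}^{\infty}A(\sigma)\,d\sigma$. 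By (\ref{surface area decay, global soln}) and $\beta>n-1\geq1$, $\int_{0}^{\infty}A(\sigma)\,d\sigma\leq|\Gamma_{1}|\int_{0}^{\infty}(1+C^{*}\sigma)^{-\beta}\,d\sigma=\frac{|\Gamma_{1}|}{C^{*}(\beta-1)}$, so $J\leq2C_{0}\sqrt{\delta}+C_{0}(\delta^{-n/2}+1)\frac{|\Gamma_{1}|}{C^{*}(\beta-1)}$. Given $\eps$, I would first fix $\delta=\delta(n,\O,\eps)$ with $2C_{0}\sqrt{\delta}\leq\eps/2$, then fix $C^{*}=C^{*}(n,\O,\beta,|\Gamma_{1}|,\eps)$ making the remaining term $\leq\eps/2$, so that $J\leq\eps$.

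To conclude I would run the bootstrap with $K:=\frac{q}{q-1}M_{0}$ and $\eps_{0}:=\frac{(q-1)^{q-1}}{2q^{q}}M_{0}^{1-q}$, which satisfy $M_{0}+K^{q}\eps_{0}=M_{0}+\frac{M_{0}}{2(q-1)}<K$; choosing $\eps=\eps_{0}$ above makes $J\leq\eps_{0}$ with $C^{*}$ depending only on $n,\O,q,\beta,M_{0},|\Gamma_{1}|$, as the statement requires. Let $E=\{t\in[0,T^{*}):M(t)\leq K\}$, which is an interval containing $0$ because $M(0)=M_{0}<K$ and $M$ is nondecreasing. If $t\in E$, then $M(\tau)\leq K$ for all $\tau\in[0,t]$, so (\ref{key ineq in proof}) gives $u(x,\tau)\leq M_{0}+K^{q}J<K$ for every $(x,\tau)\in\ol\O\times[0,t]$; since $u$ attains its maximum on this compact set, $M(t)<K$. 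Thus $E$ is relatively open in $[0,T^{*})$, and it is relatively closed by continuity of $M$, so $E=[0,T^{*})$. Hence $M$ is bounded on $[0,T^{*})$, and by Theorem \ref{Thm, fund thm for vary bdry} together with (\ref{bdd soln can extend}) this forces $T^{*}=\infty$.

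I expect the estimate of $J$ to be the main obstacle: the key is that the $s^{-n/2}$ time-singularity of the kernel becomes only an $s^{-1/2}$ singularity once integrated over the $(n-1)$-dimensional boundary, so that a genuinely small bound for $J$ survives after splitting off a short initial layer $(0,\delta)$, and one must track the dependence of $\delta$ and $C^{*}$ carefully so that indeed $C^{*}=C^{*}(n,\O,q,\beta,M_{0},|\Gamma_{1}|)$. A secondary technical matter is justifying (\ref{rep in proof}) up to $\p\O$, since $u$ is a priori only $C^{2,1}$ in $\O\times(0,T^{*})$ and continuous on $\ol\O$ with $\frac{\p u}{\p n}$ defined on $\p\O\times(0,T^{*})$; this is handled by approximation from interior subdomains and times, as in the works cited just before Theorem \ref{Thm, fund thm for vary bdry}.
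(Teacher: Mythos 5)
Your proof is correct, but it follows a genuinely different route from the paper. The paper never makes the boundary--time kernel integral globally small: instead it builds a logarithmically growing ladder $M_k=\ln[(k+1)e]\,M_0$, lets $T_k$ be the first time $M(t)$ hits $M_k$, and runs an induction showing each increment takes at least a fixed time $t_*$, using the single-step estimate of Lemma \ref{Lemma, growth rate, general}; since that lemma couples the area and time factors as $[A(T)]^{\a}t^{[1-(n-1)\a]/2}$ with the unavoidable restriction $\a<\tfrac{1}{n-1}$, the summability condition $\b\a>1$ forces $\b>n-1$, which is exactly the content of Remarks \ref{Re, sharpness of log growth} and \ref{Re, choice of beta}. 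You instead prove once and for all that $J=\sup_{t,x}\int_0^t\int_{\Gamma_{1,\tau}}N(x,y,t-\tau)\,dS(y)\,d\tau$ can be made smaller than any prescribed $\eps$ by splitting time at a fixed level $\delta$: the singular layer $s\in(0,\delta)$ is handled without any area factor (this is exactly Lemma \ref{Lemma, bound by power of Gamma, NHK} with $\a=0$, giving $C\sqrt\delta$), while the regular range uses only $\sup_{x,y}N(x,y,s)\le Cs^{-n/2}$ for $s\le 1$ (immediate from Lemma \ref{Lemma, quant of NHK}) together with $\int_0^\infty A<\infty$; you then close with an absorption/continuity argument at the threshold $K=\tfrac{q}{q-1}M_0$, and the numerology $M_0+K^q\eps_0<K$ checks out. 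This buys several things: the argument is shorter, it yields the explicit uniform bound $u\le\tfrac{q}{q-1}M_0$ (so it already delivers the spirit of Theorem \ref{Thm, control temp}, and with $K=B$ and $\eps<(B-M_0)B^{-q}$ it gives that theorem for any $B>M_0$), and it only needs $\int_0^\infty(1+C^*t)^{-\b}\,dt<\infty$, i.e.\ $\b>1$, so for $n\ge 3$ it actually proves a stronger statement than the theorem (no contradiction with the paper, whose Remarks only claim $\b>n-1$ is forced by \emph{their} method). The paper's ladder method, in exchange, stays entirely within the single estimate of Lemma \ref{Lemma, bound by power of Gamma, NHK}. Two small points you should make explicit: the bound $N(x,y,s)\le C(n,\O)$ for $s\ge 1$ is not stated in the paper (Lemma \ref{Lemma, quant of NHK} covers only $s\le 1$) and needs a one-line justification, e.g.\ the maximum principle applied to $s\mapsto N(\cdot,y,s)$ on $[1,\infty)$ or the semigroup property; and your pointwise bound $\int_{\p\O}N(x,y,s)\,dS(y)\le Cs^{-1/2}$ can simply be bypassed by quoting Lemma \ref{Lemma, bound by power of Gamma, NHK} with $\a=0$ after enlarging $\Gamma_{1,\tau}$ to $\p\O$, while the representation formula you rederive is precisely Lemma \ref{Lemma, rep for soln, initial}, so no separate approximation argument is needed.
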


In many realistic situations, it becomes very dangerous once the temperature reaches a high value. So it is of great importance for not only preventing the finite time blow-up, but also keeping the temperature below a safe limit. The next theorem accomplishes this task.

\begin{theorem}\label{Thm, control temp}
Assume (\ref{assumption on prob}), (A), (B) and (C) hold. Let $T^{*}$ be the lifespan of (\ref{Prob, vary bdry}) whose maximal solution is denoted as $u$. Define $M_{0}$ as in (\ref{initial max}). Then for any $B>M_{0}$ and $\beta>n-1$, there exists a constant $C_B^*=C_B^*(n,\O,q,\b, M_{0}, |\Gamma_{1}|, B)$ such that if 
\be\label{surface area decay, bdd soln}
A(t)\leq |\Gamma_{1}|(1+C_{B}^{*}t)^{-\beta},\ee
then $T^{*}=\infty$ and $u(x,t)\leq B$ for any $x\in\ol{\O}$ and $t\geq 0$.
\end{theorem}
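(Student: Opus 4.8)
The plan is to transfer the problem onto the Neumann heat kernel $N(x,y,t)$ of $\O$ and then close a continuity argument on the function $M(t)$ defined in (\ref{max function at time t}). The starting point is the heat kernel representation formula for a solution $u$ of (\ref{Prob, vary bdry}): for every $(x,t)\in\ol{\O}\times(0,T^*)$,
\[
u(x,t)=\int_{\O}N(x,y,t)\,u_0(y)\,dy+\int_0^t\!\!\int_{\Gamma_{1,\tau}}N(x,y,t-\tau)\,u^q(y,\tau)\,dS(y)\,d\tau ,
\]
where the $(n-2)$-dimensional interface $\wt{\Gamma}_{\tau}$ carries zero surface measure, so the modified flux (\ref{interface bdry deri. for simple model}) is irrelevant here. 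Using $N\ge0$, $\int_{\O}N(x,y,t)\,dy=1$, $u_0\le M_0$, and $u(y,\tau)\le M(t)$ for $(y,\tau)\in\ol{\O}\times[0,t]$, this gives
\[
u(x,t)\le M_0+M(t)^q\,K(x,t),\qquad K(x,t):=\int_0^t\!\!\int_{\Gamma_{1,\tau}}N(x,y,t-\tau)\,dS(y)\,d\tau ,
\]
for every $(x,t)\in\ol{\O}\times(0,T^*)$. Everything then hinges on bounding $K$ by a quantity that is small \emph{uniformly in $(x,t)\in\ol{\O}\times(0,\infty)$} once $C_B^*$ is large.

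Suppose, for the moment, that we have shown $\sup_{x\in\ol{\O},\,t>0}K(x,t)\le\eta(C_B^*)$ with $\eta(C_B^*)\to0$ as $C_B^*\to\infty$. Set $\eps:=(B-M_0)/B^q>0$ and pick $C_B^*$ with $\eta(C_B^*)<\eps$. On $[0,T^*)$ the function $M$ is continuous and nondecreasing with $M(0)=M_0<B$, so $J:=\{t\in[0,T^*):M(t)\le B\}$ is a nonempty, relatively closed subinterval. If $t_0\in J$, then $u\le B$ on $\ol{\O}\times[0,t_0]$, hence $u^q\le B^q$ there, and the representation formula gives $u(x,s)\le M_0+B^q K(x,s)\le M_0+B^q\eta(C_B^*)<B$ for every $x\in\ol{\O}$ and every $s\le t_0$; therefore $M(t_0)<B$, and by continuity $J$ is also relatively open, so $J=[0,T^*)$. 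Thus $M$ stays $\le B$, which by the blow-up alternative (\ref{bdd soln can extend}) of Theorem \ref{Thm, fund thm for vary bdry} forces $T^*=\infty$, and the displayed inequality also yields $u(x,t)\le M_0+B^q\eta(C_B^*)<B$ for all $x\in\ol{\O}$, $t\ge0$.

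It remains to estimate $K$, which is where the decay rate of $A(t)=|\Gamma_{1,t}|$ must be exploited. I would start from the Gaussian bound for the Neumann heat kernel of the $C^2$ domain $\O$: $N(x,y,s)\le C_1\big(s^{-n/2}e^{-c_1|x-y|^2/s}+1\big)$ for all $x,y\in\ol{\O}$, $s>0$. Integrating this over $\Gamma_{1,\tau}\subset\p\O$ and bounding the Gaussian factor either by $1$ (which yields a factor $A(\tau)$) or by flattening the $C^1$ hypersurface $\p\O$ locally onto $\m{R}^{n-1}$ (which yields a factor $s^{-1/2}$) produces the surface estimate
\[
\int_{\Gamma_{1,\tau}}N(x,y,s)\,dS(y)\le C_1\min\big(s^{-n/2}A(\tau),\,C_2\,s^{-1/2}\big)+C_1\,A(\tau),
\]
in which the smallness of $|\Gamma_{1,\tau}|$ is genuinely used. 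Inserting the hypothesis $A(\tau)\le|\Gamma_1|(1+C_B^*\tau)^{-\beta}$ (the right-hand side above being monotone in $A(\tau)$), the last term contributes at most $C_1|\Gamma_1|/\big(C_B^*(\beta-1)\big)$ to $K(x,t)$, which is legitimate because $\beta>n-1\ge1$. For the singular term I would interpolate, $\min(a,b)\le a^{\theta}b^{1-\theta}$, choosing $\theta$ in the interval $\big(1/\beta,\,1/(n-1)\big)$, which is nonempty \emph{precisely when} $\beta>n-1$: the condition $\theta<1/(n-1)$ keeps the factor $(t-\tau)^{-((n-1)\theta+1)/2}$ integrable near $\tau=t$, while the condition $\theta>1/\beta$ makes the decay exponent $\beta\theta$ exceed $1$, so the $\tau$-integral of $(1+C_B^*\tau)^{-\beta\theta}$ converges at infinity. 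Splitting the $\tau$-integral at $t/2$ then bounds the singular part of $K(x,t)$ by a sum of terms like $C(C_B^*)^{-1}$ and $C(C_B^*)^{-\beta\theta}$ for $t\ge1$, while for $0<t\le1$ one uses instead the crude bound $K(x,t)\lesssim\sqrt t$ coming from the $C_2s^{-1/2}$ term; matching the two regimes at a small crossover time gives $\sup_{x}K(x,t)=O\big((C_B^*)^{-\kappa}\big)$ for some $\kappa>0$, uniformly in $t>0$.

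The main obstacle is exactly this last estimate: obtaining an upper bound for $\sup_{x}K(x,t)$ that is both small and uniform over \emph{all} $t\in(0,\infty)$. The tension is between the $s^{-n/2}$ singularity of $N$, which caps the admissible interpolation exponent at $1/(n-1)$ and is the origin of the dimensional threshold $n-1$, and the need to absorb the polynomial decay of $A(\tau)$ against a time interval that may be arbitrarily long; reconciling these two demands is precisely the role of the hypothesis $\beta>n-1$. The intermediate range of $t$, where neither short-time parabolic smoothing nor the large-time decay of $A$ is on its own decisive, is the delicate part and is what forces the interpolation-and-splitting bookkeeping indicated above.
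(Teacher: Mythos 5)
Your strategy is sound and reaches the conclusion by a genuinely different route from the paper. You apply the representation formula once, from time $0$, and close a continuity (bootstrap) argument on $M(t)$, which requires a smallness bound for $K(x,t)=\int_0^t\int_{\Gamma_{1,\tau}}N(x,y,t-\tau)\,dS(y)\,d\tau$ that is uniform over \emph{all} $t>0$; for that you invoke a kernel bound valid for all times (Gaussian for $t-\tau\le1$, bounded for $t-\tau\ge1$), which is true but not stated in the paper — it can be extracted from Lemma \ref{Lemma, quant of NHK} together with the semigroup property, the symmetry in part (d) and the mass identity (\ref{NHK int id}) of Corollary \ref{Cor, prop of NHK}. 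The paper, by contrast, never leaves the short-time regime: it restarts the representation formula at successive times $T_{k-1}$ (Corollary \ref{Cor, rep for soln}), discretizes the growth of $M(t)$ into levels $M_k\nearrow B$ built from the auxiliary series (\ref{def of M_k, control temp}) (with the function $g_s$ and the parameter $\lambda_B$), and proves by induction that each level takes at least a fixed time $t_*$, using only the $t\le1$ estimate of Lemma \ref{Lemma, bound by power of Gamma, NHK} with $\a\in\big(\frac{1+s}{\b},\frac{1}{n-1}\big)$. Your route buys a real simplification — no delicately tuned sequence, no extra parameter $s$, and the full window $\theta\in\big(\frac1\b,\frac1{n-1}\big)$ suffices, with $\b>n-1$ entering in exactly the same way (nonemptiness of that window) — while the paper's iteration buys the luxury of never needing large-time kernel estimates. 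The one place where your sketch needs care is the uniform-in-$t$ bound on $K$: as literally written, the crude bound $K\lesssim\sqrt t$ on $0<t\le1$ is not small near $t=1$, so the crossover must be taken at a $C_B^*$-dependent time $t_0$ (e.g.\ $t_0=(C_B^*)^{-1/2}$), using $K\lesssim\sqrt{t_0}$ for $t\le t_0$ and, for $t\ge t_0$, splitting the $\tau$-integral at $t/2$ (and at $t-1$ when $t>1$) so that every resulting term carries a negative power of $C_B^*$; with that bookkeeping all pieces are $O\big((C_B^*)^{-\kappa}\big)$ for some $\kappa>0$, your continuity argument closes once $M_0+B^q\eta(C_B^*)<B$, and the resulting $C_B^*$ has exactly the stated dependence on $n,\O,q,\b,|\Gamma_1|,M_0,B$.
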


\subsection{Organization of the paper}
The paper is organized as follows. Section \ref{Sec, pre} will introduce some preliminary results, including the properties of the Neumann heat kernel in Corollary \ref{Cor, prop of NHK}, the representation formula (\ref{rep for soln}) and the estimate for the boundary-time integral of the Neumann heat kernel in Lemma \ref{Lemma, bound by power of Gamma, NHK}. Then in Section \ref{Sec, proof of global soln} and Section \ref{Sec, proof of bdd soln}, the main results of this paper, Theorem \ref{Thm, prevent finite time blow-up} and \ref{Thm, control temp}, will be proved 
respectively. Finally, Section \ref{Sec, fun lemma} presents an elementary lemma which is used in Remark \ref{Re, sharpness of log growth} and \ref{Re, choice of beta} to illustrate the necessity (by the method in this paper) of the condition $\beta>n-1$ in Theorem \ref{Thm, prevent finite time blow-up} and \ref{Thm, control temp}.

\section{Preliminaries}\label{Sec, pre}
Throughout this paper,  $\Phi$ refers to the heat kernel of $\m{R}^{n}$:
\be\label{fund soln of heat eq}
\Phi(x,t)=\frac{1}{(4\pi t)^{n/2}}\,\exp\Big(-\frac{|x|^2}{4t}\Big), \quad\forall\, (x,t)\in\m{R}^{n}\times(0,\infty).\ee

\subsection{Neumann Green's function and Neumann heat kernel}
\label{Subsec, NGF and NHK}
Given a bounded domain $\O$ in $\m{R}^{n}$, one can define the fundamental solution associated to the heat operator 
\be\label{heat op}
L_{tx}=\p_{t}-\Delta_{x}\ee
on $\O$ (see e.g. \cite{Dre40, Fel37, Ito53}). If in addition the boundary conditions are considered, then one can also study the fundamental solution adapted to the boundary conditions (see e.g. \cite{Ito54, Ito57a, Ito57b}). In particular, if the boundary condition is of Neumann type, then the associated fundamental solution is called the Neumann Green's function. We follow (\cite{Ito54}, Page 171) to define the Neumann Green's function in Definition \ref{Def, NGF for heat}. As a convenience of notation, we denote 
\be\label{compatible initial}
C_{N0}(\ol{\O})=\Big\{\psi\in C(\ol{\O}):\frac{\p\psi}{\p n}=0 \,\,\text{on}\,\, \p\O\Big\}.\ee
\begin{definition}[\cite{Ito54}, \cite{YZ19}]\label{Def, NGF for heat}
Let $\O$ be a bounded domain in $\m{R}^{n}$ with $C^{2}$ boundary $\p\O$. Define the Neumann Green's function for the heat operator $\p_{t}-\Delta_{x}$ in $\O$ to be a continuous function $G(x,t,y,s)$ on $\{(x,t,y,s): x,y\in\ol{\O}, t,s\in\m{R}, s<t\}$ such that for any $s\in\m{R}$ and for any $\psi\in C_{N0}(\ol{\O})$, the function $v(x,t)$ defined as 
\be\label{def of v}
v(x,t)=\int_{\O}G(x,t,y,s)\psi(y)\,dy, \quad\forall\,x\in\ol{\O},\, t>s,\ee
belongs to $C^{2,1}\big(\ol{\O}\times (s,\infty)\big)$ and satisfies (\ref{simple soln by Green fn}).
\be\left\{\label{simple soln by Green fn}\begin{array}{rlll}
(\p_{t}-\Delta_{x})v(x,t) &=& 0, & \quad\forall\,x\in\ol{\O},\,t>s, \vspace{0.02in}\\
\dfrac{\p v(x,t)}{\p n(x)} &=& 0, & \quad\forall\,x\in\p\O,\, t>s, \vspace{0.02in}\\
\lim\limits_{t\rightarrow s^{+}} v(x,t) &=& \psi(x), & \quad\text{uniformly in $x\in\ol{\O}$}. 
\end{array}\right.\ee
\end{definition}
The following lemma demonstrates the existence and uniqueness of the Neumann Green's function as well as some of its basic properties. 

\begin{lemma}\label{Lemma, prop of Green fn}
Let $\O$ be a bounded domain in $\m{R}^{n}$ with $C^{2}$ boundary $\p\O$. Then there exists a unique Neumann Green's function $G(x,t,y,s)$ for the heat operator $\p_{t}-\Delta_{x}$ in $\O$. Moreover, it has the following properties.
\begin{enumerate}[(a)]
\item $G(x,t,y,s)$ is $C^{2}$ in $x$ and $y$ ($x,y\in\ol{\O}$), and $C^{1}$ in $t$ and $s$ ($s<t$).

\item For fixed $s\in\m{R}$ and $y\in\ol{\O}$, as a function in $x$ and $t$ ($x\in\ol{\O}$ and $t>s$), $G(x,t,y,s)$ satisfies (\ref{Green fn solves heat and bdry}).
\be\left\{\label{Green fn solves heat and bdry}\begin{array}{rl}
(\p_{t}-\Delta_{x})G(x,t,y,s) = 0, &\quad\forall\,x\in\ol{\O},\, t>s, \vspace{0.02in}\\
\dfrac{\p G(x,t,y,s)}{\p n(x)} = 0, & \quad\forall\,x\in\p\O,\, t>s.
\end{array}\right.\ee

\item For any $s\in\m{R}$ and $\psi\in C_{N0}(\ol{\O})$, the function $v(x,t)$ defined in (\ref{def of v}) is the unique function in $C^{2,1}\big(\ol{\O}\times (s,\infty)\big)$ that satisfies (\ref{simple soln by Green fn}).

\item $G(x,t,y,s)\geq 0$ for any $x,y\in\ol{\O}$ and $s<t$.
\item $\int_{\O}G(x,t,y,s)\,dy=1$ for any $x\in\ol{\O}$ and $s<t$.
\item For any $x,y\in\ol{\O}$ and $s<t$, 
\[G(x,t,y,s)=G(x,t-s,y,0) \quad\text{and}\quad G(x,t,y,s)=G(y,t,x,s).\]
\end{enumerate}
\end{lemma}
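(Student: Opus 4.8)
The plan is to isolate the main analytic ingredient --- the existence of $G$ together with the regularity in (a) and the equations in (b) --- and then to deduce the remaining assertions (c)--(f) from the uniqueness of the Neumann initial-boundary value problem (\ref{simple soln by Green fn}) and from the maximum principle.

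For the analytic core I would run the classical Levi parametrix construction: seek $G(x,t,y,s)=\Phi(x-y,t-s)+V(x,t,y,s)$ with $\Phi$ as in (\ref{fund soln of heat eq}), where $V$ is a single-layer heat potential over $\partial\Omega$ chosen so that $\partial G/\partial n(x)=0$ on $\partial\Omega$. Via the jump relation for the normal derivative of the single-layer heat potential, this requirement becomes a weakly singular Volterra-type integral equation on $\partial\Omega\times(s,t)$ for the density of $V$, solvable by successive approximation; the resulting $G$ is continuous on the set in Definition \ref{Def, NGF for heat}, has the regularity asserted in (a), satisfies (\ref{Green fn solves heat and bdry}), and realizes Definition \ref{Def, NGF for heat}. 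Carrying out the same construction for the adjoint operator $-\partial_{s}-\Delta_{y}$ --- which, with Neumann conditions, is the heat problem run backward in time --- shows in addition that $G(x,t,y,s)$, as a function of $(y,s)$ with $s<t$, solves the backward heat equation with $\partial G/\partial n(y)=0$ and has the corresponding terminal-time behaviour, so that $(x,t,y,s)\mapsto G(y,t,x,s)$ also satisfies Definition \ref{Def, NGF for heat}. All of this is done in \cite{Ito54} (see also \cite{Fri64,LSU68} and \cite{YZ19}), and this is the step I expect to be the main obstacle: the parametrix estimates, the closing of the integral equation, and the boundary regularity of $G$ must all be pushed through for a merely $C^{2}$ domain, and the symmetry in (f) genuinely rests on this construction (through the dual equation) rather than following formally from (a)--(e).

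Granting the core, the remaining items are short. Uniqueness of $G$ and assertion (c) are the same statement: if $w\in C^{2,1}(\overline{\Omega}\times(s,\infty))$ solves the heat equation with $\partial w/\partial n=0$ on $\partial\Omega$ and $w(\cdot,t)\to0$ uniformly as $t\to s^{+}$, then $w\equiv0$; this follows from the energy identity
\[
\frac{d}{dt}\int_{\Omega}w^{2}\,dx=2\int_{\Omega}w\,\Delta w\,dx=-2\int_{\Omega}|\nabla w|^{2}\,dx+2\int_{\partial\Omega}w\,\frac{\partial w}{\partial n}\,dS=-2\int_{\Omega}|\nabla w|^{2}\,dx\le 0,
\]
in which the boundary term vanishes by the Neumann condition and the integration by parts is legitimate since $\Omega$ is $C^{2}$ and $w$ is $C^{2}$ up to $\overline{\Omega}$; as $\int_{\Omega}w^{2}\,dx\to0$ when $t\to s^{+}$, monotonicity forces $\int_{\Omega}w^{2}\,dx\equiv0$. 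Applying this to the difference of two candidate Green's functions tested against an arbitrary $\psi\in C_{N0}(\overline{\Omega})$, and using continuity in $y$ together with $C_{c}^{\infty}(\Omega)\subset C_{N0}(\overline{\Omega})$, gives uniqueness of $G$, and the statement itself is verbatim the uniqueness claim of (c). Three consequences are then immediate. For the first half of (f), the function $(x,t,y,s)\mapsto G(x,t-s,y,0)$ again satisfies Definition \ref{Def, NGF for heat} because $\partial_{t}-\Delta$ is autonomous, so uniqueness gives $G(x,t,y,s)=G(x,t-s,y,0)$. For the second half, $(x,t,y,s)\mapsto G(y,t,x,s)$ satisfies Definition \ref{Def, NGF for heat} by the dual statement above, so uniqueness forces $G(x,t,y,s)=G(y,t,x,s)$. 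For (e), taking $\psi\equiv1\in C_{N0}(\overline{\Omega})$, the unique solution of (\ref{simple soln by Green fn}) is $v\equiv1$, whence $\int_{\Omega}G(x,t,y,s)\,dy=1$.

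It remains to prove the nonnegativity (d). If $\psi\ge0$ and $\psi\in C_{N0}(\overline{\Omega})$, then the solution $v$ in (\ref{def of v}) satisfies $v\ge0$ on $\overline{\Omega}\times(s,\infty)$ by the minimum principle for the Neumann heat equation: a negative interior minimum is excluded in the usual way (comparing with the strict supersolution $v+\epsilon(t-s)$), and Hopf's lemma excludes a negative minimum on $\partial\Omega$, where $v$ would otherwise have a strictly negative exterior normal derivative, contradicting $\partial v/\partial n=0$. Choosing nonnegative $\psi_{j}\in C_{N0}(\overline{\Omega})$ supported in shrinking neighbourhoods of a fixed $y_{0}$ (made flat near $\partial\Omega$ if $y_{0}\in\partial\Omega$) and using the continuity of $y\mapsto G(x_{0},t_{0},y,s)$ from (a), we get $G(x_{0},t_{0},y_{0},s)=\lim_{j}v_{j}(x_{0},t_{0})\ge0$. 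Alternatively, the spectral representation $G(x,t,y,s)=\sum_{k\ge0}e^{-\mu_{k}(t-s)}\phi_{k}(x)\phi_{k}(y)$ in an $L^{2}(\Omega)$-orthonormal Neumann eigenbasis $\{\phi_{k}\}$ makes both (d), near the diagonal, and the symmetry in (f) transparent, at the cost of more delicate convergence estimates for general data in $C_{N0}(\overline{\Omega})$; I regard the parametrix route as the primary one.
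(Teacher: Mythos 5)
Your proposal is correct and takes essentially the same route as the paper, which proves this lemma only by citing Lemma 2.2 of \cite{YZ19}: that source rests precisely on the classical parametrix/single-layer construction of \cite{Ito54} for existence, regularity, the equations and the dual (adjoint) properties, with uniqueness, (c), (e) and (f) deduced from the energy identity, time-translation invariance and duality, and (d) from the maximum principle with Hopf's lemma. Since the heavy analytic input you delegate to \cite{Ito54} and \cite{YZ19} is exactly what the paper delegates, there is nothing substantively different to compare.
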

\begin{proof}
See Lemma 2.2 in \cite{YZ19}.
\end{proof}

From the above property (f), the Neumann Green's function is invariant under the time translation. Define \be\label{def of NHK}
N(x,y,t)=G(x,t,y,0).\ee
This function is called the Neumann heat kernel of $\O$ and it has the property that $G(x,t,y,s)=N(x,y,t-s)$. Based on this observation, an equivalent definition of the Neumann heat kernel is given as below.

\begin{definition}[\cite{YZ19}]\label{Def, NHK}
Let $\O$ be a bounded domain in $\m{R}^{n}$ with $C^{2}$ boundary $\p\O$. A function $N(x,y,t)$ on $\ol{\O}\times\ol{\O}\times(0,\infty)$ is called a Neumann heat kernel if 
the function $G(x,t,y,s)$ defined by 
\[G(x,t,y,s)=N(x,y,t-s)\]
is the Neumann Green's function as defined in Definition \ref{Def, NGF for heat}.
\end{definition}

Combining (\ref{def of NHK}) and Lemma \ref{Lemma, prop of Green fn}, we list some properties of the Neumann heat kernel.

\begin{corollary}\label{Cor, prop of NHK}
Let $\O$ be a bounded domain in $\m{R}^{n}$ with $C^{2}$ boundary $\p\O$. Then there exists a unique Neumann heat kernel $N(x,y,t)$ of $\O$ as in Definition \ref{Def, NHK}. In addition, it has the following properties. 
\begin{enumerate}[(a)]
\item $N(x,y,t)$ is $C^{2}$ in $x$ and $y$ ($x,y\in\ol{\O}$), and $C^{1}$ in $t$ ($t>0$).

\item For fixed $y\in\ol{\O}$, as a function in $(x,t)$, $N(x,y,t)$ satisfies (\ref{NHK solves heat and bdry}).
\be\left\{\label{NHK solves heat and bdry}\begin{array}{rl}
(\p_{t}-\Delta_{x})N(x,y,t)=0, & \quad\forall\,x\in\ol{\O},\, t>0, \vspace{0.02in}\\
\dfrac{\p N(x,y,t)}{\p n(x)}=0, & \quad\forall\,x\in\p\O,\, t>0.
\end{array}\right.\ee

\item For any $\psi\in C_{N0}(\ol{\O})$ , the function $w(x,t)$ defined by
\be\label{def of w}
w(x,t)=\int_{\O}N(x,y,t)\psi(y)\,dy\ee
is the unique function in $C^{2,1}\big(\ol{\O}\times (0,\infty)\big)$ that satisfies (\ref{simple soln by NHK}).
\be\left\{\label{simple soln by NHK}\begin{array}{rlll}
(\p_{t}-\Delta_{x})w(x,t) &=& 0,  &\quad\forall\,x\in\ol{\O},\,t>0, \vspace{0.02in}\\
\dfrac{\p w(x,t)}{\p n(x)} &=& 0, &\quad\forall\,x\in\p\O,\, t>0, \vspace{0.02in}\\
\lim\limits_{t\rightarrow 0^{+}} w(x,t) &=& \psi(x), &\quad\text{uniformly in $x\in\ol{\O}$}. 
\end{array}\right.\ee

\item $N(x,y,t)\geq 0$ and $N(x,y,t)=N(y,x,t)$  for any $x,y\in\ol{\O}$ and $t>0$.
\item For any $x\in\ol{\O}$ and $t>0$,
\be\label{NHK int id}\int_{\O}N(x,y,t)\,dy=1.\ee
\end{enumerate}
\end{corollary}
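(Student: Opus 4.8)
The final statement to be proved is Corollary \ref{Cor, prop of NHK}, which asserts the existence, uniqueness, and properties (a)--(e) of the Neumann heat kernel $N(x,y,t)$. The plan is to reduce everything to Lemma \ref{Lemma, prop of Green fn}, which has already established the analogous facts for the Neumann Green's function $G(x,t,y,s)$, together with the time-translation invariance recorded in property (f) of that lemma, namely $G(x,t,y,s)=G(x,t-s,y,0)$. Since Definition \ref{Def, NHK} declares $N(x,y,t)$ to be that function for which $G(x,t,y,s):=N(x,y,t-s)$ is the Neumann Green's function of Definition \ref{Def, NGF for heat}, the content of the corollary is essentially a dictionary translation of Lemma \ref{Lemma, prop of Green fn} under the substitution $t\mapsto t-s$.

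\textbf{Key steps.} First I would establish existence and uniqueness of $N$. For existence, set $N(x,y,t):=G(x,t,y,0)$ as in (\ref{def of NHK}); by property (f) of Lemma \ref{Lemma, prop of Green fn}, $G(x,t,y,s)=G(x,t-s,y,0)=N(x,y,t-s)$ for all $s<t$, so $N$ satisfies Definition \ref{Def, NHK}. For uniqueness, suppose $\widetilde N$ is another Neumann heat kernel; then both $G(x,t,y,s)=N(x,y,t-s)$ and $\widetilde G(x,t,y,s)=\widetilde N(x,y,t-s)$ are Neumann Green's functions, so by the uniqueness clause of Lemma \ref{Lemma, prop of Green fn} they coincide, and evaluating at $s=0$ gives $N=\widetilde N$. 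Next, property (a) follows from Lemma \ref{Lemma, prop of Green fn}(a): $N$ is $C^2$ in $x,y$ and $C^1$ in $t$ because $G(x,t,y,0)$ is $C^2$ in $x,y$ and $C^1$ in $t$ on $t>0$. Property (b) is a direct restatement of Lemma \ref{Lemma, prop of Green fn}(b) with $s=0$: the equations $(\p_t-\Delta_x)G(x,t,y,0)=0$ and $\frac{\p G}{\p n(x)}(x,t,y,0)=0$ for $x\in\ol\O$, $t>0$ become (\ref{NHK solves heat and bdry}).

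\textbf{Remaining properties.} For property (c), given $\psi\in C_{N0}(\ol\O)$, set $w(x,t):=\int_\O N(x,y,t)\psi(y)\,dy=\int_\O G(x,t,y,0)\psi(y)\,dy$, which is exactly the function $v(x,t)$ from (\ref{def of v}) with $s=0$; Lemma \ref{Lemma, prop of Green fn}(c) then gives that $w$ is the unique $C^{2,1}\big(\ol\O\times(0,\infty)\big)$ solution of (\ref{simple soln by Green fn}) with $s=0$, which is precisely (\ref{simple soln by NHK}). Property (d): nonnegativity $N(x,y,t)=G(x,t,y,0)\geq 0$ is Lemma \ref{Lemma, prop of Green fn}(d) with $s=0$; symmetry $N(x,y,t)=N(y,x,t)$ follows from the reciprocity $G(x,t,y,s)=G(y,t,x,s)$ in Lemma \ref{Lemma, prop of Green fn}(f) evaluated at $s=0$. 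Property (e), $\int_\O N(x,y,t)\,dy=1$, is Lemma \ref{Lemma, prop of Green fn}(e) with $s=0$. In short there is no genuine obstacle here: every assertion is obtained by specializing Lemma \ref{Lemma, prop of Green fn} to $s=0$ and invoking the translation identity of part (f). The only point requiring a sentence of care is the uniqueness argument, where one must make sure that "another Neumann heat kernel $\widetilde N$" is compared with $N$ through the uniqueness of the associated Green's functions rather than directly, since Definition \ref{Def, NHK} characterizes $N$ only via $G$.
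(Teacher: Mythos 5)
Your proposal is correct and matches the paper's approach exactly: the paper proves this corollary by noting it is a direct consequence of Definition \ref{Def, NGF for heat}, Lemma \ref{Lemma, prop of Green fn}, and Definition \ref{Def, NHK}, which is precisely the specialization-at-$s=0$ dictionary you carry out. Your extra care on uniqueness (passing through the uniqueness of the associated Green's function) is the right way to make the paper's one-line justification fully explicit.
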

\begin{proof}
These are direct consequences of Definition \ref{Def, NGF for heat}, Lemma \ref{Lemma, prop of Green fn} and Definition \ref{Def, NHK}.
\end{proof}

Unlike the heat kernel $\Phi$ of $\m{R}^{n}$ in (\ref{fund soln of heat eq}), $N(x,y,t)$ in general does not have an explicit formula. Nevertheless, when $t$ is small, $N(x,y,t)$ can be dominated in terms of $\Phi$.

\begin{lemma}\label{Lemma, quant of NHK}
There exists $C=C(n,\O)$ such that for any $x,y\in\ol{\O}$ and $t\in(0,1]$,
\be\label{quant of NHK}
0\leq N(x,y,t)\leq C\,\Phi(x-y,2t).\ee
\end{lemma}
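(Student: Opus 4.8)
The plan is to construct the Neumann heat kernel via the standard parametrix (Levi) method, using the free heat kernel $\Phi$ as the leading term, and then to control the correction term by the classical Gaussian bounds. First I would reduce matters to a local statement: since $\p\O$ is compact and $C^2$, it can be covered by finitely many coordinate patches in each of which $\p\O$ is flattened by a $C^2$ diffeomorphism, and near the flattened boundary the Neumann condition is handled by the reflection trick — i.e. one writes the local parametrix as $\Phi(x-y,2t)+\Phi(x-y^{*},2t)$, where $y^{*}$ is the reflection of $y$ across the (flattened) boundary. Away from $\p\O$ (points at a fixed positive distance) the Neumann condition is invisible on the relevant time scale and the parametrix is simply $\Phi(x-y,2t)$. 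A smooth partition of unity glues these pieces into a global parametrix $P(x,y,t)$ which, by construction, satisfies $0\le P(x,y,t)\le C\,\Phi(x-y,2t)$ for $x,y\in\ol\O$ and $t\in(0,1]$ (the factor $2t$ rather than $t$ is the usual slack absorbing the reflected term and the error from flattening, since $|x-y^{*}|^{2}\gtrsim |x-y|^{2}$ up to constants and the Jacobians are bounded).

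Next I would show that $N(x,y,t)=P(x,y,t)+\int_{0}^{t}\!\int_{\O}P(x,z,t-s)\,\Xi(z,y,s)\,dz\,ds$, where the kernel $\Xi$ solves the Volterra equation generated by the defect $L_{tx}P$ and the boundary defect $\p P/\p n$. The standard iteration (each step gains a power of $t^{1/2}$ from the Gaussian convolution, using that the derivatives falling on $P$ produce at worst a factor $|x-z|/(t-s)$ which the Gaussian absorbs at the cost of $(t-s)^{-1/2}$) gives a convergent Neumann series on $t\in(0,1]$, and the series remainder obeys
\[
\Big|\int_{0}^{t}\!\int_{\O}P(x,z,t-s)\,\Xi(z,y,s)\,dz\,ds\Big|\le C\,t^{1/2}\,\Phi(x-y,4t)\le C\,\Phi(x-y,2t),
\]
the last step again by adjusting constants (a Gaussian with variance $4t$ is bounded by a constant times one with variance $2t$ on a bounded domain, since the polynomial prefactor ratio is $2^{n/2}$ and the exponentials only help). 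Combining this with the bound on $P$ yields $0\le N(x,y,t)\le C\,\Phi(x-y,2t)$; nonnegativity is already recorded in Corollary \ref{Cor, prop of NHK}(d), so only the upper bound needs the construction. Alternatively, once one knows a two-sided Gaussian bound $N(x,y,t)\le C_1 t^{-n/2}\exp(-|x-y|^{2}/(C_2 t))$ for $t\in(0,1]$ (which for $C^2$ domains with Neumann boundary conditions is classical, e.g. via the above parametrix or via heat-semigroup/Moser-iteration arguments), one simply chooses the constant $C$ in \eqref{quant of NHK} large enough, depending on $n,\O$, to dominate $C_1 t^{-n/2}e^{-|x-y|^2/(C_2 t)}$ by $C(8\pi t)^{-n/2}e^{-|x-y|^2/(8t)}=C\,\Phi(x-y,2t)$ — possible on $t\in(0,1]$ because $|x-y|$ is bounded on $\ol\O$, so the discrepancy between the exponents $1/(C_2 t)$ and $1/(8t)$ costs only a bounded multiplicative factor.

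The main obstacle is the uniformity of all constants in the parametrix construction near $\wt\Gamma$-free parts of $\p\O$: one must check that the $C^2$ character of $\p\O$ alone (no convexity, no extra smoothness) suffices for the reflection-type local parametrix to have the claimed Gaussian majorant with constants depending only on $n$ and the geometry of $\O$. This is where one uses that the boundary can be straightened by $C^2$ maps with uniformly bounded derivatives and uniformly bounded-below injectivity radius, so the error term $L_{tx}P$ is $O\big(|x-z|^{-1}\cdot(\text{something bounded})\big)$-type and the Volterra iteration closes. Everything else — the convolution estimates for Gaussians, the convergence of the Neumann series for small $t$, and the final constant-chasing — is routine and I would not carry it out in detail. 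Since the paper elsewhere cites \cite{YZ19} for the construction and basic properties of $N$, I would present this as: the bound follows from the standard parametrix construction of $N$ for $C^2$ domains (cf. the references in Section \ref{Subsec, NGF and NHK}) together with the Gaussian estimate on the parametrix and its correction term, and then give the short constant-adjustment argument above in full.
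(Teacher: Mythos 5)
The paper does not prove this lemma at all — it simply cites Lemma 2.5 of \cite{YZ19} — so your plan of outsourcing the bound to the standard parametrix construction is reasonable in spirit. However, the step you dismiss as routine constant-chasing contains a genuine error, and it is exactly the step that produces the specific majorant $\Phi(x-y,2t)$. Twice you claim that a Gaussian with \emph{larger} variance is dominated, uniformly for $t\in(0,1]$, by a constant times a Gaussian with \emph{smaller} variance because $|x-y|$ is bounded on $\ol{\O}$: once in ``$C\,t^{1/2}\Phi(x-y,4t)\le C\,\Phi(x-y,2t)$'', and once when converting a generic bound $C_1t^{-n/2}e^{-|x-y|^2/(C_2t)}$ into $C\,\Phi(x-y,2t)$. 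The comparison goes the wrong way. For fixed $x\neq y$,
\[
\frac{\Phi(x-y,4t)}{\Phi(x-y,2t)}=2^{-n/2}\exp\Big(\frac{|x-y|^2}{16t}\Big)\longrightarrow\infty\qquad\text{as }t\to 0^{+},
\]
so no constant $C=C(n,\O)$ works on all of $(0,1]$; boundedness of $|x-y|$ gives uniformity in $x,y$ for fixed $t$, not uniformity in $t$. The valid inequality is the reverse one: $\Phi(x-y,\kappa t)\le C\,\Phi(x-y,2t)$ for $\kappa\le 2$. Likewise, a classical Gaussian upper bound with exponent $e^{-|x-y|^2/(C_2t)}$ yields (\ref{quant of NHK}) only if $C_2\le 8$; if $C_2>8$, no enlargement of the multiplicative constant can repair the exponent, so the ``short constant-adjustment argument'' you defer to does not exist as stated.

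To close the gap you must track the Gaussian exponent through the whole construction rather than degrade it and try to come back. Concretely: the local parametrix should be built from $\Phi(\cdot,t)$ (note that $\Phi(x-y,2t)$ does not solve $\p_t-\Delta_x=0$, so with your choice the defect is not lower order and the Volterra iteration does not gain powers of $t^{1/2}$), giving exponent $e^{-|x-y|^2/(4t)}$ up to boundary-flattening distortion; the iteration must then be shown to produce a correction bounded by $Ct^{\delta}\,t^{-n/2}e^{-|x-y|^2/(at)}$ with some $a\le 8$, after which domination by $\Phi(x-y,2t)$ is legitimate because you are passing from a narrower Gaussian to a wider one. Alternatively, invoke a bound with exponent constant arbitrarily close to $4$ (e.g.\ via Davies' exponential-perturbation method for the Neumann semigroup on a bounded $C^2$, or even Lipschitz, domain) and then enlarge the variance to $2t$; that route is sound, but it is a materially stronger input than ``some Gaussian bound with unspecified $C_2$,'' which is all your sketch guarantees.
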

\begin{proof}
See Lemma 2.5 in \cite{YZ19}. \end{proof}

\subsection{Representation formula by the Neumann heat kernel}
\label{Subsec, Rep formula by NHK}
One of the applications of the Neumann heat kernel is the representation formula of the solution to (\ref{Prob, vary bdry}). As a heuristic argument, let's fix any $x\in\O$ and $t>0$ and pretend the solution $u$ to (\ref{Prob, vary bdry}) is sufficiently smooth up to the boundary. Then it follows from part (b) and (d) of Corollary \ref{Cor, prop of NHK} that 
\[(\p_{t}-\Delta_{y})N(x,y,t-\tau)=(\p_{t}-\Delta_{y})N(y,x,t-\tau)=0, \quad\forall\, y\in\ol{\O},\,0<\tau<t.\]
As a result, 
\[\int_{0}^{t}\int_{\O}(\p_{t}-\Delta_{y})N(x,y,t-\tau)\,u(y,\tau)\,dy\,d\tau=0.\]
Equivalently,
\[\int_{0}^{t}\int_{\O}(-\p_{\tau}-\Delta_{y})N(x,y,t-\tau)\,u(y,\tau)\,dy\,d\tau=0.\]
Now formally integrating by parts and taking advantage of (b), (c) and (d) in Corollary \ref{Cor, prop of NHK}, we obtain
\[\begin{split}
u(x,t)=& \int_{0}^{t}\int_{\O}N(x,y,t-\tau)\,(\p_{\tau}-\Delta_{y})u(y,\tau)\,dy\,d\tau+\int_{\O}N(x,y,t)\,u(y,0)\,dy\\
&+\int_{0}^{t}\int_{\p\O}N(x,y,t-\tau)\,\frac{\p u(y,\tau)}{\p n(y)}\,dS(y)\,d\tau. 
\end{split}\]
Keeping in mind that $u$ is the solution to (\ref{Prob, vary bdry}), so
\[u(x,t)=\int_{\O}N(x,y,t)u_{0}(y)\,dy+\int_{0}^{t}\int_{\Gamma_{1,\tau}}N(x,y,t-\tau)u^{q}(y,\tau)\,dS(y)\,d\tau.\]
We formally state the above result in Lemma \ref{Lemma, rep for soln, initial}. The rigorous proof can be carried out by similar argument as in Appendix A of \cite{YZ19}.
\begin{lemma}\label{Lemma, rep for soln, initial}
Let $u$ be the maximal solution to (\ref{Prob, vary bdry}) with the lifespan $T^{*}$. Then for any $(x,t)\in\ol{\O}\times(0,T^{*})$, 
\be\label{rep for soln, initial}
u(x,t)=\int_{\O}N(x,y,t)u_{0}(y)\,dy+\int_{0}^{t}\int_{\Gamma_{1,\tau}}N(x,y,t-\tau)u^{q}(y,\tau)\,dS(y)\,d\tau.\ee
\end{lemma}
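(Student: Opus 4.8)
The plan is to establish the representation formula \eqref{rep for soln, initial} by the classical parabolic duality argument, being careful about the interior singularity of the Neumann heat kernel and the limited regularity of $u$ up to the boundary. Fix $(x,t)\in\O\times(0,T^{*})$ (the case $x\in\p\O$ follows afterward by continuity of both sides, using that $u\in C(\ol\O\times[0,T^*))$ and that the right-hand side is continuous in $x$ up to $\ol\O$). The starting point is Green's second identity applied to the pair $u(y,\tau)$ and $N(x,y,t-\tau)$ on the space-time cylinder $\O\times(\varepsilon,t-\varepsilon)$ for small $\varepsilon>0$: since $u$ solves the heat equation in $\O\times(0,T^*]$ and, by part (b) of Corollary \ref{Cor, prop of NHK}, $N(x,\cdot,t-\cdot)$ solves the backward heat equation in $\ol\O\times(0,t)$, integration by parts in $y$ and $\tau$ produces only boundary terms. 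The boundary term at $\tau=t-\varepsilon$ is $\int_\O N(x,y,\varepsilon)u(y,t-\varepsilon)\,dy$, which converges to $u(x,t)$ as $\varepsilon\to 0^+$ by the reproducing property \eqref{simple soln by NHK} in part (c) (the approximate-identity behavior of $N$), combined with continuity of $u$. The term at $\tau=\varepsilon$ converges to $\int_\O N(x,y,t)u(y,0)\,dy=\int_\O N(x,y,t)u_0(y)\,dy$, again using part (a)–(c) of the corollary and continuity of $u$ at $\tau=0$.

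The lateral boundary terms are where the structure of \eqref{Prob, vary bdry} enters. Integration by parts in $y$ generates $\int_{\varepsilon}^{t-\varepsilon}\int_{\p\O}\bigl(N(x,y,t-\tau)\partial_{n(y)}u(y,\tau)-u(y,\tau)\partial_{n(y)}N(x,y,t-\tau)\bigr)\,dS(y)\,d\tau$. By part (b) of Corollary \ref{Cor, prop of NHK}, $\partial_{n(y)}N(x,y,t-\tau)=0$ for all $y\in\p\O$, so the second summand vanishes identically. For the first summand, the Neumann condition in \eqref{Prob, vary bdry} gives $\partial_{n(y)}u=u^q$ on $S_{1,\tau}$ (i.e. for $y\in\Gamma_{1,\tau}$), $\partial_{n(y)}u=0$ on $S_{2,\tau}$ (i.e. for $y\in\Gamma_{2,\tau}$), and by part (2) of Definition \ref{Def, soln to prob with vary bdry}, $\partial_{n(y)}u=\frac12 u^q$ on the interface $\wt\Gamma_\tau$, which has zero surface measure and hence contributes nothing to the integral. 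Therefore the lateral term collapses to $\int_{\varepsilon}^{t-\varepsilon}\int_{\Gamma_{1,\tau}}N(x,y,t-\tau)u^q(y,\tau)\,dS(y)\,d\tau$, and letting $\varepsilon\to0^+$ yields the claimed integral, the convergence being justified because the integrand is continuous in $(y,\tau)$ on $\p\O\times[0,t)$ and $N(x,y,t-\tau)$ is bounded for $\tau$ away from $t$ while $x$ is interior (so no singularity of $N$ is encountered on the lateral boundary at all when $x\in\O$). Assembling the four limits gives \eqref{rep for soln, initial}.

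The main obstacle is the rigorous justification of the integration by parts near $\tau=t$, precisely because $u$ is only asserted to lie in $C^{2,1}(\O\times(0,T^*))\cap C(\ol\O\times[0,T^*))$ — it need not be $C^2$ up to $\p\O$, and $N(x,y,t-\tau)$ together with its $y$-derivatives can be unbounded as $\tau\to t^-$ (even with $x$ interior, $N(x,\cdot,s)$ concentrates at $x$ as $s\to0^+$). This is handled exactly as in Appendix A of \cite{YZ19}: one works on $\O\times(\varepsilon,t-\varepsilon)$, and additionally excises a small ball $B_\delta(x)$ so that all Green's-identity manipulations take place where $u$ is genuinely $C^{2,1}$ and $N$ is smooth; the boundary integral over $\p B_\delta(x)$ is shown to converge to $u(x,t)$ as $\delta\to0^+$ by the same local analysis used for the free-space heat kernel (using Lemma \ref{Lemma, quant of NHK} to compare $N$ with $\Phi$ and control the singular layer), while the interior integral over $\O\setminus B_\delta(x)$ passes to the limit by dominated convergence using local $L^1$-type bounds on $u$, $Du$ and $D^2u$ coming from interior parabolic regularity. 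A secondary technical point is that $\Gamma_{1,\tau}$ varies with $\tau$: but since $\tau\mapsto\mathbf 1_{\Gamma_{1,\tau}}(y)$ is, by hypothesis (B)–(C), measurable and in fact the map $\Psi$ gives a continuous description of the moving boundary, Fubini's theorem and the dominated convergence theorem apply to the double integral without difficulty. Thus the formula follows, and we simply invoke the cited appendix for the omitted estimates rather than reproducing them.
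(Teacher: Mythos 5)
Your proposal is correct and follows essentially the same route as the paper: the formal Green's-identity/duality computation with the Neumann heat kernel given before the lemma, with the rigorous justification of the integration by parts near the singularity deferred to Appendix A of \cite{YZ19}, exactly as the paper does. The extra care you articulate (excising a ball around $x$, the zero surface measure of $\wt{\Gamma}_\tau$, measurability of $\tau\mapsto\Gamma_{1,\tau}$) is consistent with, and slightly more explicit than, the paper's treatment.
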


\begin{corollary}\label{Cor, rep for soln}
Let $u$ be the maximal solution to (\ref{Prob, vary bdry}) with the lifespan $T^{*}$. Then for any $T\in [0,T^{*})$ and for any $(x,t)\in\ol{\O}\times(0,T^{*}-T)$,
\be\label{rep for soln}
u(x,T+t)=\int_{\O}N(x,y,t)u(y,T)\,dy+\int_{0}^{t}\int_{\Gamma_{1, T+\tau}}N(x,y,t-\tau)u^{q}(y,T+\tau)\,dS(y)\,d\tau.\ee
\end{corollary}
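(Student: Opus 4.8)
The plan is to reduce Corollary \ref{Cor, rep for soln} to the already-established representation formula of Lemma \ref{Lemma, rep for soln, initial} by translating the time origin. Fix $T\in[0,T^{*})$ and set $v(x,t):=u(x,T+t)$ for $(x,t)\in\ol{\O}\times[0,T^{*}-T)$. I would argue that $v$ is the maximal solution of the problem obtained from (\ref{Prob, vary bdry}) upon replacing the initial datum $u_{0}$ by $u(\cdot,T)$, the moving boundary pieces $\Gamma_{1,t},\Gamma_{2,t},\wt{\Gamma}_{t}$ by their time-translates $\Gamma_{1,T+t},\Gamma_{2,T+t},\wt{\Gamma}_{T+t}$, and the lateral sets accordingly. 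Granting this, Lemma \ref{Lemma, rep for soln, initial} applied to $v$ gives, for every $(x,t)\in\ol{\O}\times(0,T^{*}-T)$,
\[
v(x,t)=\int_{\O}N(x,y,t)\,u(y,T)\,dy+\int_{0}^{t}\int_{\Gamma_{1,T+\tau}}N(x,y,t-\tau)\,v^{q}(y,\tau)\,dS(y)\,d\tau ,
\]
which is exactly (\ref{rep for soln}) after recalling $v(x,t)=u(x,T+t)$ and $v(y,\tau)=u(y,T+\tau)$. When $T=0$ this is literally Lemma \ref{Lemma, rep for soln, initial}, so one may assume $T>0$.

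To justify the claim there are only routine checks. First, the translated family $\{\Gamma_{1,T+t},\Gamma_{2,T+t},\wt{\Gamma}_{T+t}\}_{t\ge0}$ still satisfies (A), (B), (C): the structural properties in (A) and the monotonicity in (C) are inherited verbatim, and the bijection required in (B) is obtained by restricting $\Psi$ to $\p\O\times[T,\infty)$ and composing with the translation $(y,s)\mapsto(y,s-T)$. Second, $u(\cdot,T)$ is admissible initial data: it lies in $C(\ol{\O})$, and by Theorem \ref{Thm, fund thm for vary bdry} it is strictly positive on $\ol{\O}$ since $T>0$; moreover $v$ inherits the regularity $v\in C^{2,1}\big(\O\times(0,T^{*}-T)\big)\cap C\big(\ol{\O}\times[0,T^{*}-T)\big)$, satisfies the pointwise equations, and satisfies the interface condition (\ref{interface bdry deri. for simple model}) on the translated set $S_{0,\cdot}$. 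Finally, $T^{*}-T$ is precisely the lifespan of $v$ because of the blow-up alternative (\ref{bdd soln can extend}): if $v$ extended past $T^{*}-T$, uniqueness would extend $u$ past $T^{*}$, a contradiction, and $\lim_{t\nearrow T^{*}-T}\sup_{\ol{\O}\times[0,t]}v=\lim_{t\nearrow T^{*}}M(t)$.

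Alternatively — and this is in fact how Lemma \ref{Lemma, rep for soln, initial} is itself proved (cf. Appendix A of \cite{YZ19}) — one can bypass the reduction and repeat the heuristic computation preceding Lemma \ref{Lemma, rep for soln, initial}, changing the lower limit of the time integral from $0$ to $T$. Concretely: fix $x\in\O$, $t\in(0,T^{*}-T)$, and for small $\eps>0$ integrate the identity $(-\p_{\tau}-\Delta_{y})N(x,y,T+t-\tau)=0$ (from parts (b), (d) of Corollary \ref{Cor, prop of NHK}) against $u(y,\tau)$ over $\O\times(T,T+t-\eps)$; integrating by parts, the homogeneous Neumann condition on $N$ kills the $\p_{n}N$ boundary term, the heat equation kills the interior term, the flux of $u$ produces the surface integral over $\Gamma_{1,\tau}$ (the interface $\wt{\Gamma}_{\tau}$ has zero surface measure), the endpoint $\tau=T$ contributes $\int_{\O}N(x,y,t)u(y,T)\,dy$, and letting $\eps\to0^{+}$ the endpoint $\tau=T+t-\eps$ contributes $u(x,T+t)$ since $N(x,\cdot,\eps)\,dy$ is an approximate identity; the change of variables $\tau\mapsto T+\tau$ then gives (\ref{rep for soln}) for interior $x$, and the case $x\in\p\O$ follows by continuity of both sides up to $\ol{\O}$.

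I do not expect a genuine obstacle at the level of this corollary: all the analytic substance is already packaged in Lemma \ref{Lemma, rep for soln, initial}, and the reduction only needs the trivial observation that time-translation preserves hypotheses (A)--(C) and the notion of (maximal) solution. The delicate points live one level down, in the proof of Lemma \ref{Lemma, rep for soln, initial} itself: the integrability of $N(x,y,t-\tau)$ and its derivatives near the diagonal singularity $\tau\to t$, the legitimacy of integrating by parts up to a merely $C^{2}$ (not convex) boundary and across the moving interface $\wt{\Gamma}_{\tau}$ where only the averaged condition (\ref{interface bdry deri. for simple model}) holds, and the continuity of the right-hand side of (\ref{rep for soln}) as $x\to\p\O$. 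These are inherited from \cite{YZ19}, not re-proved here.
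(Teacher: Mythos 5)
Your first argument is exactly the paper's proof: the paper simply regards $u(\cdot,T)$ as new initial data and applies Lemma \ref{Lemma, rep for soln, initial}, which is your time-translation reduction spelled out with the (correct) routine checks that (A)--(C) and maximality are preserved. The alternative direct integration-by-parts derivation you sketch is consistent but not needed; the paper delegates all that analytic content to Lemma \ref{Lemma, rep for soln, initial} and \cite{YZ19}, just as you do in your main route.
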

\begin{proof}
Regarding $u(\cdot,T)$ as the new initial data and then applying Lemma \ref{Lemma, rep for soln, initial} leads to the conclusion. \end{proof}

\subsection{Boundary-time integral of the Neumann heat kernel}
\label{Subsec, bdry-time int est}

\begin{lemma}\label{Lemma, bound by power of Gamma, NHK}
Let $\alpha\in\big[0,\frac{1}{n-1}\big)$ and let $N(x,y,t)$ be the Neumann heat kernel as in Definition \ref{Def, NHK}. Then there exists $C=C(n,\O,\a)$ such that for any $\Gamma\subseteq\p\O$, $x\in\ol{\O}$ and $t\in[0,1]$,
\be\label{bound by power of gamma, NHK}
\int_{0}^{t}\int_{\Gamma}N(x,y,t-\tau)\,dS(y)\,d\tau\leq C \,|\Gamma|^{\alpha}\,t^{[1-(n-1)\alpha]/2}.\ee
\end{lemma}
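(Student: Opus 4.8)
The plan is to bound the boundary–time integral by replacing the Neumann heat kernel with the Euclidean heat kernel $\Phi$ via Lemma \ref{Lemma, quant of NHK}, and then to carry out the spatial integral over $\Gamma$ and the time integral separately. Since $t\in[0,1]$ and $0<t-\tau\leq 1$ throughout the region of integration, Lemma \ref{Lemma, quant of NHK} gives $N(x,y,t-\tau)\leq C\,\Phi(x-y,2(t-\tau))$, so it suffices to estimate
\be
\int_{0}^{t}\int_{\Gamma}\Phi(x-y,2(t-\tau))\,dS(y)\,d\tau
= \int_{0}^{t}\int_{\Gamma}\frac{1}{(8\pi(t-\tau))^{n/2}}\exp\Big(-\frac{|x-y|^{2}}{8(t-\tau)}\Big)\,dS(y)\,d\tau.
\ee
The key geometric input is that $\Gamma$ is an $(n-1)$-dimensional piece of a $C^{2}$ hypersurface, so for $x$ fixed the surface measure of $\{y\in\p\O:|x-y|<r\}$ grows like $r^{n-1}$ (with a constant depending only on $n$ and $\O$). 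First I would fix $s:=t-\tau\in(0,t]$ and estimate the inner surface integral $I(s):=\int_{\Gamma}\Phi(x-y,2s)\,dS(y)$.

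For the inner integral I would split $\Gamma$ into dyadic annuli $A_{k}=\{y\in\Gamma: |x-y|\in[2^{k}\sqrt{s},2^{k+1}\sqrt{s})\}$ for $k\geq 0$ together with the core $\{|x-y|<\sqrt{s}\}$. On each annulus the Gaussian is bounded by $s^{-n/2}e^{-c\,2^{2k}}$ and the surface measure is at most $C\min\{|\Gamma|,(2^{k+1}\sqrt{s})^{n-1}\}$; summing the geometric-type series in $k$ produces the bound $I(s)\leq C\,s^{-n/2}\min\{|\Gamma|,s^{(n-1)/2}\}$. The presence of the minimum is what lets us interpolate: for any $\a\in[0,\tfrac{1}{n-1})$ we have $\min\{|\Gamma|,s^{(n-1)/2}\}\leq |\Gamma|^{\a}\,s^{(n-1)(1-\a)/2}$, and hence $I(s)\leq C\,|\Gamma|^{\a}\,s^{-n/2+(n-1)(1-\a)/2}=C\,|\Gamma|^{\a}\,s^{-[1+(n-1)\a]/2}$. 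The exponent on $s$ here is $-[1+(n-1)\a]/2$, which is $>-1$ precisely because $\a<\tfrac{1}{n-1}$, so the time integral converges.

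Finally I would integrate in $\tau$: substituting $s=t-\tau$,
\be
\int_{0}^{t}I(t-\tau)\,d\tau \leq C\,|\Gamma|^{\a}\int_{0}^{t}s^{-[1+(n-1)\a]/2}\,ds
= C\,|\Gamma|^{\a}\,\frac{t^{[1-(n-1)\a]/2}}{[1-(n-1)\a]/2},
\ee
which is exactly the claimed bound $C(n,\O,\a)\,|\Gamma|^{\a}\,t^{[1-(n-1)\a]/2}$; the case $\a=0$ is the trivial endpoint where one just uses $\min\{\cdot\}\leq s^{(n-1)/2}$ directly. The main obstacle, and the only place where the $C^{2}$ regularity of $\p\O$ really enters, is justifying the uniform surface-measure growth estimate $|\{y\in\p\O:|x-y|<r\}|\leq C(n,\O)\,r^{n-1}$ for all $x\in\ol{\O}$ and all $r>0$ (equivalently, Ahlfors upper regularity of $\p\O$); this is standard for compact $C^{2}$ hypersurfaces via a finite cover by graph patches with uniformly bounded gradients, but it must be stated carefully so that the constant is genuinely independent of $x$ and $r$. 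Everything else is the dyadic decomposition and the elementary interpolation of the minimum, both routine.
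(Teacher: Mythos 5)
Your proposal is correct, and it shares the paper's first reduction but then diverges in how the Euclidean estimate is handled. Like the paper, you begin with the substitution $\tau\to t-\tau$ (implicitly, by working with $s=t-\tau$) and the domination $N(x,y,s)\leq C\,\Phi(x-y,2s)$ from Lemma \ref{Lemma, quant of NHK}, valid here since $0<s\leq t\leq 1$. At that point the paper simply quotes Lemma 2.9 of \cite{YZ19}, which is precisely the bound $\int_0^{2t}\int_\Gamma\Phi(x-y,\tau)\,dS(y)\,d\tau\leq C|\Gamma|^{\a}t^{[1-(n-1)\a]/2}$, whereas you reprove that estimate from scratch: dyadic annuli $|x-y|\sim 2^k\sqrt{s}$, the upper Ahlfors bound $|\p\O\cap B(x,r)|\leq C(n,\O)r^{n-1}$ for a compact $C^2$ hypersurface, summation of the rapidly convergent series to get $\int_\Gamma\Phi(x-y,2s)\,dS(y)\leq Cs^{-n/2}\min\{|\Gamma|,s^{(n-1)/2}\}$, the interpolation $\min\{|\Gamma|,s^{(n-1)/2}\}\leq|\Gamma|^{\a}s^{(n-1)(1-\a)/2}$, and finally the time integration, where the exponent $-[1+(n-1)\a]/2>-1$ exactly because $\a<\frac{1}{n-1}$, yielding $t^{[1-(n-1)\a]/2}$ with constant $C(n,\O,\a)$. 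All of these steps check out (including the constant's dependence and the trivial endpoint $\a=0$), and your caveat about proving the surface-measure growth uniformly in $x$ and $r$ is exactly the geometric content that the cited Lemma 2.9 encapsulates. What each route buys: the paper's proof is three lines because the real work was exported to \cite{YZ19}; yours is self-contained and makes transparent where the $C^2$ boundary regularity and the restriction $\a<\frac{1}{n-1}$ enter, at the cost of having to state and justify the Ahlfors-type bound carefully.
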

\begin{proof}
Performing the change of variable $\tau\rightarrow t-\tau$ leads to 
\[\int_{0}^{t}\int_{\Gamma}N(x,y,t-\tau)\,dS(y)\,d\tau=\int_{0}^{t}\int_{\Gamma}N(x,y,\tau)\,dS(y)\,d\tau.\]
Taking advantage of Lemma \ref{Lemma, quant of NHK} and the fact $t\leq 1$, there exists a constant $C=C(n,\O)$ such that 
\begin{eqnarray*}
\int_{0}^{t}\int_{\Gamma}N(x,y,\tau)\,dS(y)\,d\tau &\leq & C\int_{0}^{t}\int_{\Gamma}\Phi(x-y,2\tau)\,dS(y)\,d\tau\\
&=& C\int_{0}^{2t}\int_{\Gamma}\Phi(x-y,\tau)\,dS(y)\,d\tau.
\end{eqnarray*}
Finally, applying Lemma 2.9 in \cite{YZ19}, there exists a constant $C=C(n,\O,\a)$ such that 
\begin{eqnarray*}
\int_{0}^{2t}\int_{\Gamma}\Phi(x-y,\tau)\,dS(y)\,d\tau &\leq & C |\Gamma|^{\a}(2t)^{[1-(n-1)\a]/2}\\
&=& C |\Gamma|^{\a}t^{[1-(n-1)\a]/2}.
\end{eqnarray*}

\end{proof}

\section{Proof of Theorem \ref{Thm, prevent finite time blow-up}}
\label{Sec, proof of global soln}
The following lemma is in the same spirit as Lemma 3.1 in \cite{YZ19} which characterizes how fast the supremum norm of the solution can grow. This is an essential estimate that will be used in the proofs of Theorem \ref{Thm, prevent finite time blow-up} and Theorem \ref{Thm, control temp}.

\begin{lemma}\label{Lemma, growth rate, general}
Let $u$ be the maximal solution to (\ref{Prob, vary bdry}) with the lifespan $T^{*}$. Define $M(t)$ and $A(t)$ as in (\ref{max function at time t}) and (\ref{area fn}). Then for any $\alpha\in[0,\frac{1}{n-1})$, there exists $C=C(n,\O,\alpha)$ such that for any $T\in [0,T^{*})$ and for any $0\leq t<\min\{1,T^{*}-T\}$,  
\be\label{growth rate, general}
\frac{M(T+t)-M(T)}{M^{q}(T+t)}\leq C\,[A(T)]^{\alpha}\,t^{[1-(n-1)\alpha]/2}.\ee
\end{lemma}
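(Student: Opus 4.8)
The plan is to use the representation formula in Corollary~\ref{Cor, rep for soln} together with the boundary-time integral estimate in Lemma~\ref{Lemma, bound by power of Gamma, NHK}. Fix $T\in[0,T^{*})$ and $0\le t<\min\{1,T^{*}-T\}$. For any $(x,s)$ with $s\in(0,t]$, formula (\ref{rep for soln}) gives
\[
u(x,T+s)=\int_{\O}N(x,y,s)u(y,T)\,dy+\int_{0}^{s}\int_{\Gamma_{1,T+\tau}}N(x,y,s-\tau)u^{q}(y,T+\tau)\,dS(y)\,d\tau.
\]
The first term is bounded by $M(T)$ using the mass identity (\ref{NHK int id}) and $N\ge 0$. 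For the second term, I would bound $u^{q}(y,T+\tau)\le M^{q}(T+t)$ (valid since $\tau\le s\le t$ and $M$ is nondecreasing), pull that constant out, and then use the monotonicity assumption (C): since $T+\tau\ge T$, we have $\Gamma_{1,T+\tau}\subseteq\Gamma_{1,T}$, so the domain of the surface integral can be enlarged to $\Gamma_{1,T}$, which has surface area $A(T)$. This gives
\[
u(x,T+s)\le M(T)+M^{q}(T+t)\int_{0}^{s}\int_{\Gamma_{1,T}}N(x,y,s-\tau)\,dS(y)\,d\tau.
\]

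Next I would apply Lemma~\ref{Lemma, bound by power of Gamma, NHK} with $\Gamma=\Gamma_{1,T}$ and the exponent $\alpha\in[0,\frac1{n-1})$: the double integral is at most $C(n,\O,\alpha)\,[A(T)]^{\alpha}\,s^{[1-(n-1)\alpha]/2}$, using $s\le t\le 1$. Since the right-hand side is increasing in $s$, replacing $s$ by $t$ yields, for all $x\in\ol\O$ and all $s\in(0,t]$,
\[
u(x,T+s)\le M(T)+C\,[A(T)]^{\alpha}\,t^{[1-(n-1)\alpha]/2}\,M^{q}(T+t).
\]
Taking the supremum over $x\in\ol\O$ and $s\in(0,t]$, and noting that the supremum of $u$ over $\ol\O\times[0,T]$ is $M(T)\le$ the right-hand side, we obtain $M(T+t)\le M(T)+C\,[A(T)]^{\alpha}\,t^{[1-(n-1)\alpha]/2}\,M^{q}(T+t)$. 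Rearranging and dividing by $M^{q}(T+t)$ (which is positive by Theorem~\ref{Thm, fund thm for vary bdry}) gives exactly (\ref{growth rate, general}).

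The only genuinely delicate points are bookkeeping ones. First, one must be careful that $M(t)$ as defined in (\ref{max function at time t}) is a supremum over the closed time interval $[0,t]$, so that $M(T+t)=\sup_{s\in[0,t]}\sup_{x}u(x,T+s)$ and the passage from the pointwise bound to the bound on $M(T+t)$ is immediate; the monotonicity of $M$ in its argument (used to say $u^q(y,T+\tau)\le M^q(T+t)$ and to replace $s$ by $t$) follows directly from the definition. Second, one must confirm that the set inclusion $\Gamma_{1,T+\tau}\subseteq\Gamma_{1,T}$ supplied by assumption (C) is in the correct direction — it is, since larger time means a smaller radiation boundary — and that enlarging the integration domain only increases the integral, which is legitimate because $N\ge 0$ (Corollary~\ref{Cor, prop of NHK}(d)). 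I do not anticipate a substantive obstacle here; the lemma is essentially a direct consequence of the representation formula, the positivity and mass identity for $N$, the shrinking property (C), and the kernel estimate of Lemma~\ref{Lemma, bound by power of Gamma, NHK}.
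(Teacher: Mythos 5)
Your proposal is correct and follows essentially the same route as the paper: apply the representation formula of Corollary \ref{Cor, rep for soln} at each intermediate time, bound the initial term by $M(T)$ via (\ref{NHK int id}), enlarge $\Gamma_{1,T+\tau}$ to $\Gamma_{1,T}$ using assumption (C) and $N\geq 0$, and invoke Lemma \ref{Lemma, bound by power of Gamma, NHK}. The paper organizes the final step as a case split over $\sigma\in[0,T]$ versus $\sigma\in(T,T+t]$ rather than your supremum-over-$s$ phrasing, but this is only a cosmetic difference.
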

\begin{proof}
It is equivalent to prove 
\[M(T+t)\leq M(T)+CM^{q}(T+t)\,[A(T)]^{\alpha}\,t^{[1-(n-1)\alpha]/2}.\]
That is to show for any $x\in\ol{\O}$ and for any $\sigma\in[0,T+t]$,
\be\label{equiv for growth}
u(x,\sigma)\leq M(T)+CM^{q}(T+t)\,[A(T)]^{\alpha}\,t^{[1-(n-1)\alpha]/2}.\ee
Fix any $x\in\ol{\O}$ and $\sigma\in[0,T+t]$. There are two cases.
\begin{itemize}
\item Firstly, $\sigma\in[0,T]$. In this case, it is obvious that $u(x,\sigma)\leq M(T)$, which implies (\ref{equiv for growth}).

\item Secondly, $\sigma\in(T,T+t]$. By the representation formula (\ref{rep for soln}) with $t=\sigma-T$,
\begin{align*}
u(x,\sigma) &= \int_{\O}N(x,y,\sigma-T)u(y,T)\,dy+\int_{0}^{\sigma-T}\int_{\Gamma_{1,T+\tau}}N(x,y,\sigma-T-\tau)u^{q}(y,T+\tau)\,dS(y)\,d\tau \\
&\leq M(T)\int_{\O}N(x,y,\sigma-T)\,dy+M^{q}(\sigma)\int_{0}^{\sigma-T}\int_{\Gamma_{1,T+\tau}}N(x,y,\sigma-T-\tau)\,dS(y)\,d\tau.
\end{align*}
Applying part (e) in Corollary \ref{Cor, prop of NHK} yields $\int_{\O}N(x,y,\sigma-T)\,dy=1$, combining with the facts that $\Gamma_{1,T+\tau}\subseteq \Gamma_{1,T}$ and $N(x,y,\sigma-T-\tau)\geq 0$, we obtain
\[u(x,\sigma)\leq M(T)+M^{q}(\sigma)\int_{0}^{\sigma-T}\int_{\Gamma_{1,T}}N(x,y,\sigma-T-\tau)\,dS(y)\,d\tau.\]

Since $\sigma-T\leq t<1$, applying Lemma \ref{Lemma, bound by power of Gamma, NHK} with $\Gamma=\Gamma_{1,T}$ and $t=\sigma-T$ yields
\[u(x,\sigma) \leq M(T)+CM^{q}(\sigma)|\Gamma_{1,T}|^{\a}(\sigma-T)^{[1-(n-1)\a]/2}.\]
Then by using the fact $\sigma-T\leq t$ again, we conclude that
\[u(x,\sigma) \leq M(T)+CM^{q}(T+t)[A(T)]^{\a}t^{[1-(n-1)\a]/2}.\]
\end{itemize}

\end{proof}

Our strategy in the proof of Theorem \ref{Thm, prevent finite time blow-up} is to construct a strictly increasing sequence $\{M_{k}\}_{k\geq 0}$ such that the function $M(t)$ spends at least a certain time $t_{*}$ to increase from $M_{k-1}$ to $M_{k}$ when the surface area function $A(t)$ decreases at a certain speed. Denote $r_k=\frac{M_k}{M_{k-1}}$ to be the ratio in the k-th step. In order to secure a fixed lower bound $t_{*}$ in each step, there is a dilemma.
\begin{itemize}
\item [(1)] If $r_{k}$ is too small, then $A(t)$ is required to decay at a very fast speed in the current k-th step.

\item [(2)] If $r_{k}$ is too large, then $A(t)$ has to decay very fast in the future steps.
\end{itemize}
Thus, in order to avoid the super fast decay of $A(t)$ at any single step (since it may be difficult to achieve in practice), the growth rate $r_{k}$ has to be set delicately. We will discuss this issue in more details in Remark \ref{Re, sharpness of log growth} after the following proof.

\begin{proof}[Proof of Theorem \ref{Thm, prevent finite time blow-up}]
Denote $T_{0}=0$. For any $k\geq 1$, choose 
\be\label{def of M_(k), infty}
M_{k}=\ln[(k+1)e]M_{0}.\ee
Let $T_{k}$ be the first time that $M(t)$ reaches $M_{k}$ and define $t_{k}=T_{k}-T_{k-1}$. Since $u$ is continuous on $\ol{\O}\times[0,T^*)$,
\[T_{k}=\min\{t\geq 0: M(t)=M_k\}.\]
Fix $\beta>n-1$. In the proof below, $C$, $C_{1}$, $C_{2}$ and $C_3$ denote positive constants which only depend on $n$, $\O$, $q$ and $\beta$. 

Choose $\a $ to be any number between $\frac{1}{\beta}$ and $\frac{1}{n-1}$. Without loss of generality, we just fix $\a$ to be
\[\a =\frac{1}{2}\Big(\frac{1}{\beta}+\frac{1}{n-1}\Big).\]
For the convenience of applying (\ref{growth rate, general}) in Lemma \ref{Lemma, growth rate, general}, we define $\wt{\a}$ (corresponding to the power $\a$) as 
\[\wt{\a}=\frac{1-(n-1)\a }{2}=\frac{1}{4}\Big(1-\frac{n-1}{\beta}\Big).\]
Then the above powers $\a$ and $\wt{\a}$ satisfy
\be\label{range of para}
\frac{1}{\beta}<\a <\frac{1}{n-1}\quad\text{and}\quad 0<\wt{\a}<\frac{1}{4}.\ee
Hence, for any $k\geq 1$ such that $t_k\leq 1$, plugging $T=T_{k-1}$ and $t=t_{k}$ into Lemma \ref{Lemma, growth rate, general} leads to
\be\label{initial est, infty}
M_{k} \leq M_{k-1}+ C M_{k}^{q}[A(T_{k-1})]^{\a }t_{k}^{\wt{\a}}\ee
for some constant $C$.

We are trying to find constants $t_{*}\in(0,1]$ and $C^{*}>0$ depending only on $n$, $\O$, $q$, $\beta$, $|\Gamma_{1}|$ and $M_{0}$ such that if 
\be\label{decay of surface area, infty}
A(t)\leq |\Gamma_{1}|(1+C^{*}t)^{-\beta},\ee
then for any $k\geq 1$,
\be\label{induction, infty}
t_{k}\geq t_{*}.\ee
It is readily seen that as long as such $t_{*}$ and $C^{*}$ would be found, the theorem is justified. In the rest of the proof, the values of $t_{*}$ and $C^{*}$ will be determined via an induction argument. 

When $k=1$, if $t_{1}\geq 1$, then $t_{1}\geq t_{*}$ automatically holds since $t_{*}$ will be chosen in $(0,1]$, see (\ref{def of t_(*), infty}). If $t_1<1$, then  it follows from (\ref{initial est, infty}) that 
\[M_{1} \leq M_{0}+ C M_{1}^{q}|\Gamma_{1}|^{\a }t_{1}^{\wt{\a}}.\]
This implies
\[t_{1}\geq \bigg(\frac{M_{1}-M_{0}}{CM_{1}^{q}|\Gamma_{1}|^{\a }}\bigg)^{1/\wt{\a}}.\]
Due to the definition (\ref{def of M_(k), infty}),
\begin{align*}
t_{1} &\geq \bigg(\frac{(\ln 2)M_{0}}{C\ln^{q}(2e)M_{0}^{q}|\Gamma_{1}|^{\a }}\bigg)^{1/\wt{\a}}\\
&=C_{1}\big(M_{0}^{q-1}|\Gamma_{1}|^{\a }\big)^{-1/\wt{\a}},
\end{align*}
for some constant $C_{1}$. Denote 
\be\label{nota Y}
Y=M_{0}^{q-1}|\Gamma_{1}|^{\a }\ee
and define 
\be\label{def of t_(*), infty}
t_{*}=\min\Big\{1, C_{1}Y^{-1/\wt{\a}}\Big\}.\ee
Then (\ref{induction, infty}) holds for $k=1$.

Now suppose (\ref{induction, infty}) has been verified for $1\leq k\leq j$ with some $j\geq 1$, we are trying to prove (\ref{induction, infty}) for $k=j+1$. If $t_{j+1}\geq 1$, then again $t_{j+1}\geq t_{*}$ automatically holds. If $t_{j+1}<1$, then plugging $k=j+1$ into (\ref{initial est, infty}) yields
\be\label{recursive ineq, infty}
M_{j+1} \leq M_{j}+ C M_{j+1}^{q}[A(T_{j})]^{\a }t_{j+1}^{\wt{\a}}.\ee
Since $t_{k}\geq t_{*}$ for any $1\leq k\leq j$ by induction, then $T_{j}\geq jt_{*}$. Therefore, $A(T_{j})\leq A(jt_{*})$ due to the assumption (C). So (\ref{recursive ineq, infty}) leads to
\be\label{key step}
\frac{M_{j+1}-M_{j}}{M_{j+1}^{q}}\leq C[A(jt_{*})]^{\a }t_{j+1}^{\wt{\a}}.\ee
Taking advantage of the definition (\ref{def of M_(k), infty}) and the assumption (\ref{decay of surface area, infty}), we obtain 
\begin{align}\label{key step, simp}
\frac{\ln\big(\frac{j+2}{j+1}\big)}{\ln^{q}[(j+2)e] M_{0}^{q-1}}&\leq C|\Gamma_{1}|^{\a }(1+C^{*}jt_{*})^{-\beta\a }t_{j+1}^{\wt{\a}} \notag\\
&\leq C|\Gamma_{1}|^{\a }(C^{*}t_{*})^{-\beta\a }j^{-\beta\a }t_{j+1}^{\wt{\a}}.
\end{align}
Rearranging this inequality and recalling the notation $Y$ in (\ref{nota Y}),
\be\label{lower bound for t_(j+1)}
t_{j+1}^{\wt{\a}}\geq \frac{j^{\beta\a }\ln\big(\frac{j+2}{j+1}\big)}{\ln^{q}[(j+2)e]}\,\frac{(C^{*}t_{*})^{\beta\a }}{CY}.\ee
Since (\ref{range of para}) implies $\beta\a >1$, then 
\be\label{sequence goes to infty}
\lim_{j\rightarrow\infty}\frac{j^{\beta\a }\ln\big(\frac{j+2}{j+1}\big)}{\ln^{q}[(j+2)e]}=\infty.\ee
As a result, there exists a uniform positive lower bound (depending on $n$, $\beta$ and $q$) for 
\[\mbox{$j^{\beta\a }\ln\big(\frac{j+2}{j+1}\big)\big/\ln^{q}[(j+2)e]$}\]
when $j\geq 1$. So it follows from (\ref{lower bound for t_(j+1)}) that 
\[t_{j+1}^{\wt{\a}}\geq \frac{C_{2}(C^{*}t_{*})^{\beta\a }}{Y}.\]
for some constant $C_{2}$. In order for $t_{j+1}\geq t_{*}$, if suffices to have 
\[\frac{C_{2}(C^{*}t_{*})^{\beta\a }}{Y}\geq t_{*}^{\wt{\a}}.\]
Equivalently,
\be\label{est for C^(*)}
C^{*}\geq \bigg(\frac{Yt_{*}^{\wt{\a}-\beta\a }}{C_{2}}\bigg)^{\frac{1}{\beta\a }}.\ee
Noticing $\wt{\a}<\frac14<\beta\a $ and recalling the choice (\ref{def of t_(*), infty}) for $t_{*}$, then (\ref{est for C^(*)}) becomes
\begin{align}
C^{*} &\geq \bigg(\frac{Y}{C_{2}}\bigg)^{\frac{1}{\beta\a }}\max\Big\{1,\,C_{1}^{\frac{\wt{\a}}{\beta\a }-1}Y^{\frac{1}{\wt{\a}}-\frac{1}{\beta\a }}\Big\} \nonumber\\
&=C_{2}^{-\frac{1}{\beta\a }}\max\Big\{Y^{\frac{1}{\beta\a }}, \, C_{1}^{\frac{\wt{\a}}{\beta\a }-1}Y^{\frac{1}{\wt{\a}}}\Big\}. \label{cond on C^(*)}
\end{align}
Hence, the theorem is justified by defining 
\be\label{def of C^*}
C^{*}=C_3\max\big\{Y^{\frac{1}{\beta\a }}, \, Y^{\frac{1}{\wt{\a}}}\big\},\ee
where $C_{3}:=C_{2}^{-\frac{1}{\beta\a }}\max\big\{1,\,C_{1}^{\frac{\wt{\a}}{\beta\a }-1}\big\}$.
\end{proof}

\begin{remark}\label{Re, sharpness of log growth}
From the above proof, we can see from (\ref{key step}) that the crucial ingredient is to find lower bounds for $\Lambda_{j}$, where 
\[\Lambda_{j}:=\frac{M_{j}-M_{j-1}}{M_{j}^{q}},\quad\forall \,j\geq 1.\]
On the one hand, it follows from Lemma \ref{Lemma, sharp grow rate, seq} (see Section \ref{Sec, fun lemma}) that for any $\eps>0$, 
\be\label{small seq}
\Lambda_{j}<\frac{\eps}{j} \quad \text{occurs infinitely many often.} \ee  
On the other hand, if choosing $M_{j}\sim \ln(j+2)$, then it is readily seen that for any $\delta>0$, 
\be\label{not so small}
\lim_{j\to\infty} j^{1+\delta}\Lambda_{j}=\infty.\ee
Combining (\ref{small seq}) and (\ref{not so small}) together, the choice of a sequence $\{M_j\}$ with logarithmic growth seems optimal. This is why we define the sequence $\{M_j\}$ as in (\ref{def of M_(k), infty}).
\end{remark}

\begin{remark}\label{Re, choice of beta}
As a consequence of the above argument, with the choice (\ref{def of M_(k), infty}) for $\{M_{j}\}$, the corresponding $\Lambda_{j}$ decays almost at the rate of $\frac1j$. Meanwhile, due to the decay (\ref{decay of surface area, infty}) of the surface area $A(t)$, we can see from (\ref{key step, simp}) that the power gain on $j$ is $\b\a$. Hence, in order to compensate the $\frac{1}{j}$ decay of $\Lambda_{j}$, $\b\a$ has to be greater than 1. On the other hand, the index $\a$ can not be made larger than $\frac{1}{n-1}$ due to Lemma \ref{Lemma, bound by power of Gamma, NHK} (also see Proposition 5.1 in \cite{YZ19} for a discussion on the sharpness of this upper bound $\frac{1}{n-1}$). Therefore, $\b$ has to be chosen greater than $n-1$.
\end{remark}

\section{Proof of Theorem \ref{Thm, control temp}}
\label{Sec, proof of bdd soln}
The essential idea is similar to that in the proof of Theorem \ref{Thm, prevent finite time blow-up}, but the choice of the sequence $\{M_{k}\}_{k\geq 1}$ will be much more complicated. In the proof of Theorem \ref{Thm, prevent finite time blow-up}, $\{M_{k}\}_{k\geq 1}$ is still allowed to increase logarithmically. However, in the proof of Theorem \ref{Thm, control temp}, the growth rate has to be slower since $\{M_{k}\}_{k\geq 1}$ is bounded by $B$. As a result, delicate adjustment is needed in the definition of $\{M_{k}\}_{k\geq 1}$, see (\ref{def of M_k, control temp}).

\begin{proof}[Proof of Theorem \ref{Thm, control temp}]
Choose $s$ to be any number such that 
\be\label{range of s}
0<s<\frac{\b}{n-1}-1.\ee
Without loss of generality, we just fix $s$ to be 
\be\label{def of s}
s=\frac{1}{2}\Big(\frac{\b}{n-1}-1\Big).\ee
So this number $s$ only depends on $n$ and $\b$.
Next, we define a function $g_{s}:(0,\infty)\rightarrow (1,\infty)$ by 
\be\label{def of aux fcn}
g_{s}(\lam)=\sum_{m=0}^{\infty}\frac{1}{(1+m)(1+\lam m)^s}.\ee
It is readily seen that $g_{s}$ is a decreasing and continuous bijection from $(0,\infty)$ to $(1,\infty)$. So it is valid to define 
\be\label{lam_B}
\lam_{B}=g_{s}^{-1}\Big(\frac{B}{M_0}\Big),\ee
which only depends on $n$, $\b$ and $\frac{B}{M_0}$. In addition,
\be\label{end behavior of lam_B}
\lim_{B\rightarrow M_0^+}\lam_{B}=\infty\quad\text{and}\quad \lim_{B\rightarrow\infty}\lam_{B}=0.\ee
After these preparation, we define $M_{k}$ by
\be\label{def of M_k, control temp}
M_k=M_0\sum_{m=0}^{k}\frac{1}{(1+m)(1+\lam_{B}m)^s}, \quad\forall\, k\geq 0.\ee
Similar to the proof of Theorem \ref{Thm, prevent finite time blow-up}, we define $T_k$ to be the first time that $M(t)$ reaches $M_{k}$ and denote $t_k=T_k-T_{k-1}$. In the proof below, $C$ will denote a positive constant which only depends on $n$, $\O$, $q$ and $\b$. $C_1$, $C_2$ and $C_3$ will denote positive constants which additionally depend on $\frac{B}{M_0}$.

We choose $\a$ such that 
\be\label{range for alpha}
\frac{1+s}{\b}<\a<\frac{1}{n-1}.\ee
Without loss of generality, we just fix $\a$ to be 
\be\label{def of alpha}
\a=\frac{1}{2}\Big(\frac{1+s}{\b}+\frac{1}{n-1}\Big)=\frac{1}{4}\Big(\frac{1}{\b}+\frac{3}{n-1}\Big).\ee
Then the corresponding power $\wt{\a}$ to $\a$ is defined as 
\[\wt{\a}=\frac{1-(n-1)\a}{2}=\frac{1}{8}\Big(1-\frac{n-1}{\b}\Big).\]
Thus, for any $k\geq 1$ such that $t_k\leq 1$, plugging $T=T_{k-1}$ and $t=t_{k}$ into Lemma \ref{Lemma, growth rate, general} leads to
\be\label{initial est, control}
M_{k} \leq M_{k-1}+ C M_{k}^{q}[A(T_{k-1})]^{\a }t_{k}^{\wt{\a}}\ee
for some constant $C$. 

We are trying to find constants $t_{*}\in(0,1]$ and $C_{B}^{*}>0$, depending only on $n$, $\O$, $q$, $\beta$, $|\Gamma_{1}|$, $M_{0}$ and $B$,  such that if 
\be\label{decay of surface area, control}
A(t)\leq |\Gamma_{1}|(1+C_{B}^{*}t)^{-\beta},\ee
then for any $k\geq 1$,
\be\label{induction, control}
t_{k}\geq t_{*}.\ee
It is readily seen that as long as such $t_{*}$ and $C_{B}^{*}$ would be found, the theorem is justified. In the rest of the proof, the values of $t_{*}$ and $C_{B}^{*}$ will be determined via an induction argument. 

When $k=1$, if $t_{1}\geq 1$, then $t_{1}\geq t_{*}$ automatically holds since $t_{*}$ will be chosen in $(0,1]$, see (\ref{def of t_(*), control}). If $t_1<1$, then  it follows from (\ref{initial est, control}) that 
\[M_{1} \leq M_{0}+ C M_{1}^{q}|\Gamma_{1}|^{\a }t_{1}^{\wt{\a}}.\]
This yields
\[t_{1}\geq \bigg(\frac{M_{1}-M_{0}}{CM_{1}^{q}|\Gamma_{1}|^{\a }}\bigg)^{1/\wt{\a}}.\]
Recalling the definition (\ref{def of M_k, control temp}) for $M_1$, 
\[t_{1} \geq \bigg(\frac{2^{q-1}}{CM_{0}^{q-1}|\Gamma_{1}|^{\a }}\frac{(1+\lam_B)^{(q-1)s}}{[1+2(1+\lam_B)^s]^q}\bigg)^{1/\wt{\a}}.\]
Writing
\be\label{nota Y, control}
Y=M_{0}^{q-1}|\Gamma_{1}|^{\a }\ee
and denoting
\be\label{def of C_1}
C_{1}=\bigg(\frac{2^{q-1}}{C}\frac{(1+\lam_B)^{(q-1)s}}{[1+2(1+\lam_B)^s]^q}\bigg)^{1/\wt{\a}},\ee
then 
\[t_{1}\geq C_{1}Y^{-1/\wt{\a}}.\]
Define 
\be\label{def of t_(*), control}
t_{*}=\min\Big\{1, C_{1}Y^{-1/\wt{\a}}\Big\}.\ee
With this choice of $t_{*}$, the induction (\ref{induction, control}) holds for $k=1$.

Now suppose (\ref{induction, control}) has been verified for $1\leq k\leq j$ with some $j\geq 1$, we are trying to prove (\ref{induction, control}) for $k=j+1$. If $t_{j+1}\geq 1$, then again $t_{j+1}\geq t_{*}$ automatically holds. If $t_{j+1}<1$, then plugging $k=j+1$ into (\ref{initial est, control}) yields
\be\label{recursive ineq, control}
M_{j+1} \leq M_{j}+ C M_{j+1}^{q}[A(T_{j})]^{\a }t_{j+1}^{\wt{\a}}.\ee
Since $t_{k}\geq t_{*}$ for any $1\leq k\leq j$ by induction, then $T_{j}\geq jt_{*}$. Therefore $A(T_{j})\leq A(jt_{*})$ due to the assumption (C). So (\ref{recursive ineq, control}) leads to
\be\label{est in j step}
\frac{M_{j+1}-M_{j}}{M_{j+1}^{q}}\leq C[A(jt_{*})]^{\a }t_{j+1}^{\wt{\a}}.\ee
Recalling the definition (\ref{def of M_k, control temp}) and using the fact $1+\lam_{B}(j+1)\leq (1+\lam_{B})(j+2)$, we have 
\[M_{j+1}-M_{j}=\frac{M_0}{(j+2)[1+\lam_B(j+1)]^s}\geq \frac{M_0}{(j+2)^{1+s}(1+\lam_B)^s}\]
and 
\[M_{j+1}\leq M_0\sum_{m=0}^{j+1}\frac{1}{1+m}\leq M_0\ln[(j+2)e]. \]
Therefore, 
\be\label{lower bdd for ratio j step}\frac{M_{j+1}-M_j}{M_{j+1}^{q}}\geq \frac{1}{M_0^{q-1}\ln^{q}[(j+2)e]\,(j+2)^{1+s}(1+\lam_B)^s}.\ee
Now taking advantage of the assumption (\ref{decay of surface area, control}), we obtain
\be\label{area bdd j step}
[A(jt_*)]^{-\a} \geq |\Gamma_1|^{-\a}(1+C_{B}^{*}jt_*)^{\b\a}\geq |\Gamma_1|^{-\a}(C_B^*)^{\b\a}j^{\b\a}t_*^{\b\a}.\ee
Combining (\ref{est in j step}), (\ref{lower bdd for ratio j step}) and (\ref{area bdd j step}) together yields 
\[t_{j+1}^{\wt{\a}} \geq \frac{(C_B^*)^{\b\a}j^{\b\a}t_*^{\b\a}}{C|\Gamma_1|^{\a}M_0^{q-1}\ln^{q}[(j+2)e]\,(j+2)^{1+s}(1+\lam_B)^s}.\]
Recalling $Y=|\Gamma_{1}|^{\a}M_0^{q-1}$, then rearranging the right hand side of the above inequality leads to
\be\label{lower bdd j step, 1}
t_{j+1}^{\wt{\a}} \geq \frac{j^{\b\a}}{\ln^{q}[(j+2)e]\,(j+2)^{1+s}}\frac{(C_B^*)^{\b\a}t_*^{\b\a}}{C(1+\lam_B)^s Y}.\ee
Since (\ref{range for alpha}) implies $\beta\a >1+s$, then 
\be\label{sequence goes to infty, control}
\lim_{j\rightarrow\infty}\frac{j^{\b\a}}{\ln^{q}[(j+2)e]\,(j+2)^{1+s}}=\infty.\ee
Hence, there exists a uniform positive lower bound (depending on $n$, $\b$ and $q$) for
\[\dfrac{j^{\b\a}}{\ln^{q}[(j+2)e]\,(j+2)^{1+s}}\]
when $j\geq 1$. So it follows from (\ref{lower bdd j step, 1}) that 
\[t_{j+1}^{\wt{\a}}\geq \frac{C(C_B^*)^{\b\a}t_*^{\b\a}}{(1+\lam_B)^s Y}\]
for another constant $C$ which only depends on $n$, $\O$, $q$ and $\b$. Writing 
\be\label{def of C_2}
C_{2}=\frac{C}{(1+\lam_B)^s}\ee
to be a constant which additionally depends on $\frac{B}{M_0}$, then 
\[t_{j+1}^{\wt{\a}}\geq \frac{C_2(C_B^*)^{\b\a}t_*^{\b\a}}{Y}.\]
In order for $t_{j+1}\geq t_{*}$, if suffices to have 
\[\frac{C_2(C_B^*)^{\b\a}t_*^{\b\a}}{Y}\geq t_{*}^{\wt{\a}}.\]
Equivalently,
\be\label{est for C_B^*}
C_B^{*}\geq \Big(\frac{Y}{C_2}\Big)^{\frac{1}{\b\a}}t_{*}^{\frac{\wt{\a}}{\b\a}-1}.\ee
Noticing $\wt{\a}<\frac18<\beta\a $ and recalling the choice (\ref{def of t_(*), control}) for $t_{*}$, then (\ref{est for C_B^*}) becomes
\begin{align}
C_B^{*} &\geq \bigg(\frac{Y}{C_{2}}\bigg)^{\frac{1}{\beta\a }}\max\Big\{1,\,C_{1}^{\frac{\wt{\a}}{\beta\a }-1}Y^{\frac{1}{\wt{\a}}-\frac{1}{\beta\a }}\Big\} \nonumber\\
&=C_{2}^{-\frac{1}{\beta\a }}\max\Big\{Y^{\frac{1}{\beta\a }}, \, C_{1}^{\frac{\wt{\a}}{\beta\a }-1}Y^{\frac{1}{\wt{\a}}}\Big\}. \label{cond on C_B^*}
\end{align}
Therefore, the theorem is justified by defining 
\be\label{def of C_B^*}
C_B^{*}=C_3\max\big\{Y^{\frac{1}{\beta\a }}, \, Y^{\frac{1}{\wt{\a}}}\big\},\ee
where 
\be\label{def of C_3}
C_{3}:=C_{2}^{-\frac{1}{\beta\a }}\max\big\{1,\,C_{1}^{\frac{\wt{\a}}{\beta\a }-1}\big\}.\ee
\end{proof}

\begin{remark}
Fix $n$, $\O$, $q$, $\b$, $M_0$ and $|\Gamma_1|$. Let  $B$ vary in the range $(M_0,\infty)$. Based on the definitions (\ref{def of C_1}), (\ref{def of C_2}) and (\ref{def of C_3}) for the constants $C_{1}$, $C_{2}$ and $C_{3}$ in the above proof, it follows from (\ref{end behavior of lam_B}) that 
\be\label{end behavior or const}\begin{split}
\lim_{B\rightarrow M_0^+}C_1=0, &\qquad \lim_{B\rightarrow\infty} C_1=\text{a positive constant},\\
\lim_{B\rightarrow M_0^+}C_2=0, &\qquad \lim_{B\rightarrow\infty} C_2=\text{a positive constant},\\
\lim_{B\rightarrow M_0^+}C_3=\infty, &\qquad \lim_{B\rightarrow\infty} C_3=\text{a positive constant}.
\end{split}\ee
As a result, it follow from (\ref{def of C_B^*}) and (\ref{end behavior or const}) that 
\be\label{end behavior of C_B^*}
\lim_{B\rightarrow M_0^+}C_B^*=\infty, \qquad \lim_{B\rightarrow\infty} C_B^*=\text{a positive constant}.\ee
\begin{itemize}
\item For the first relation $\lim\limits_{B\rightarrow M_0^+}C_B^*=\infty$ in (\ref{end behavior of C_B^*}), it accords with our expectation since the surface area has to decay super fast if the temperature is barely allowed to increase.

\item For the second relation $\lim\limits_{B\rightarrow\infty} C_B^*=\text{a positive constant}$ in (\ref{end behavior of C_B^*}), it matches the conclusion in Theorem \ref{Thm, prevent finite time blow-up}.
\end{itemize}
\end{remark}

\section{An elementary lemma}
\label{Sec, fun lemma}
This section will provide an auxiliary lemma to support the arguments in Remark \ref{Re, sharpness of log growth} and \ref{Re, choice of beta} which explained why the power $\b$ in Theorem \ref{Thm, prevent finite time blow-up} has to be greater than $n-1$ if using the method in this paper.

For any positive and increasing sequence $\{M_{j}\}_{j\geq 1}$, the growth rate at the $j$th term is usually defined as $\frac{M_j-M_{j-1}}{M_{j-1}}$. Given any $q>1$, one of the key steps in the proofs of Theorem \ref{Thm, prevent finite time blow-up} and \ref{Thm, control temp} is to estimate how large the $\Lambda_{j}$ is, where 
\be\label{def of Lam}
\Lambda_j:=\frac{M_j-M_{j-1}}{M_{j}^q}.\ee
When $M_j$ is close to $M_{j-1}$, $\Lambda_j$ can be regarded as a nonlinear analogue of the growth rate at the $j$th term. But when $M_j$ is much larger than $M_{j-1}$, $\Lambda_j$ should be barely called a growth rate.

On the other hand, it is readily seen from (\ref{def of Lam}) that 
\[\Lambda_{j}\leq \min\bigg\{\frac{M_j-M_{j-1}}{M_{1}^q},\, \frac{1}{M_{j}^{q-1}}\bigg\}.\]
Hence,
$\Lambda_j\to 0^{+}$ as $j\to \infty$, no matter $\{M_j\}_{j\geq 1}$ is a bounded sequence or not. But how fast $\Lambda_j$ converges to 0? The following lemma concludes that for any $\eps>0$, $\Lambda_j<\frac{\eps}{j}$ for infinitely many $j$'s.

\begin{lemma}\label{Lemma, sharp grow rate, seq}
For any $q>1$ and for any positive and increasing sequence $\{M_{j}\}_{j\geq 1}$,
\be\label{infimum is zero}
\liminf_{j\to \infty}\,\frac{j(M_{j}-M_{j-1})}{M_{j}^{q}}=0.\ee
\end{lemma}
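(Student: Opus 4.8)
The plan is to argue by contradiction. Suppose $\liminf_{j\to\infty} \frac{j(M_j-M_{j-1})}{M_j^q} = L > 0$ (it could also be $+\infty$, but the same argument applies with any fixed positive lower bound). Then there is an index $j_0$ and a constant $c > 0$ such that $\frac{j(M_j-M_{j-1})}{M_j^q} \geq c$ for all $j \geq j_0$, i.e.
\be\label{contra hyp}
M_j - M_{j-1} \geq \frac{c\, M_j^q}{j} \geq \frac{c\, M_{j_0}^q}{j}, \qquad \forall\, j \geq j_0,
\ee
where the last inequality uses that $\{M_j\}$ is increasing and $q > 0$, so $M_j^q \geq M_{j_0}^q$. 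Summing \eqref{contra hyp} from $j_0+1$ to $N$ gives $M_N - M_{j_0} \geq c\, M_{j_0}^q \sum_{j=j_0+1}^{N} \frac{1}{j}$, and since the harmonic series diverges, $M_N \to \infty$ as $N \to \infty$.

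Now that we know $M_N \to \infty$, we exploit the exponent $q > 1$ to get a contradiction. Since $M_j \to \infty$, pick $j_1 \geq j_0$ with $M_{j_1} \geq 1$; then for $j \geq j_1$ we have $M_j \geq 1$, hence $M_j^q \geq M_j$ — but we actually want the reverse direction to bound the increments, so instead rewrite \eqref{contra hyp} as $\frac{M_j - M_{j-1}}{M_j^q} \geq \frac{c}{j}$ and compare with the telescoping-type estimate for $\int \frac{dm}{m^q}$. Concretely, because $\{M_j\}$ is increasing, $\frac{M_j - M_{j-1}}{M_j^q} \leq \int_{M_{j-1}}^{M_j} \frac{dm}{m^q} = \frac{1}{q-1}\big(M_{j-1}^{1-q} - M_j^{1-q}\big)$ once $M_{j-1} > 0$ (which holds, since the sequence is positive). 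Summing this from $j_1+1$ to $N$ telescopes to at most $\frac{1}{q-1} M_{j_1}^{1-q} < \infty$, a finite bound independent of $N$. On the other hand, summing the lower bound $\frac{c}{j}$ from $j_1+1$ to $N$ diverges as $N \to \infty$. This is the desired contradiction, so the liminf must be $0$.

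The main obstacle — really the only subtlety — is making the comparison $\frac{M_j - M_{j-1}}{M_j^q} \leq \frac{1}{q-1}(M_{j-1}^{1-q} - M_j^{1-q})$ rigorous; it follows from monotonicity of $m \mapsto m^{-q}$ on $(0,\infty)$ and the mean-value / integral bound $\frac{M_j-M_{j-1}}{M_j^q} \leq \int_{M_{j-1}}^{M_j} m^{-q}\,dm$, valid because $M_j^{-q} \leq m^{-q}$ for $m \in [M_{j-1}, M_j]$. One should also handle the degenerate case $M_j = M_{j-1}$ for some $j$ (then $\Lambda_j = 0$ and \eqref{infimum is zero} is immediate along that subsequence), and note the argument needs no assumption on whether $\{M_j\}$ is bounded: if it is bounded the left side of \eqref{contra hyp} forces $\sum 1/j < \infty$ directly, and if it is unbounded the integral-comparison telescoping does the job. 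Thus in all cases the liminf is zero.
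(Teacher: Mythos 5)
Your proof is correct, but it takes a genuinely different route from the paper's. Both arguments begin identically: negate the liminf to obtain $M_j-M_{j-1}\geq c\,M_j^q/j$ for all large $j$. The paper then proves $\sup_j M_j=\infty$, truncates the exponent via $\wt q=\min\{q,2\}$, substitutes $x_j=\eps M_j^{\wt q-1}$, and applies the mean value theorem to derive the reciprocal recursion $x_{j-1}^{-1}\geq x_j^{-1}+(\wt q-1)/j$, whose summation contradicts the divergence of the harmonic series. You instead observe that for any positive increasing sequence and any $q>1$ the series $\sum_j (M_j-M_{j-1})/M_j^q$ converges, via the integral comparison
\[
\frac{M_j-M_{j-1}}{M_j^{q}}\leq\int_{M_{j-1}}^{M_j}\frac{dm}{m^{q}}=\frac{1}{q-1}\Big(M_{j-1}^{1-q}-M_j^{1-q}\Big)
\]
and telescoping, while the contradiction hypothesis forces this series to dominate $c\sum_j 1/j=\infty$. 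This is an Abel--Dini type argument: it is shorter, avoids the exponent truncation and the MVT/recursion machinery entirely, and actually yields the stronger fact $\sum_j\Lambda_j<\infty$, from which the lemma follows at once. Two small tidying remarks: your intermediate step showing $M_N\to\infty$ and the choice of $j_1$ with $M_{j_1}\geq 1$ are superfluous, since the integral comparison only needs $M_{j-1}>0$; and the aside about the degenerate case $M_j=M_{j-1}$ is moot, because under the contradiction hypothesis the increments are strictly positive for large $j$, and the comparison inequality holds trivially when an increment vanishes.
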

\begin{proof}
If (\ref{infimum is zero}) does not hold, then there exists $\eps>0$ and $N>0$ such that for any $j\geq N$,
\be\label{lower bdd for seq}
\frac{j(M_{j}-M_{j-1})}{M_{j}^{q}}\geq \eps.\ee
Fix the above $\eps$ and $N$ in the rest of the proof.

We first claim that (\ref{lower bdd for seq}) implies that 
\be\label{sup is inf}
\sup_{j\geq 1}M_j=\infty.\ee
In fact, since $\{M_j\}_{j\geq 1}$ is an increasing sequence, it follows from (\ref{lower bdd for seq}) that for any $j\geq 2$, 
\be\label{diff large}
M_j-M_{j-1}\geq \frac{\eps M_j^q}{j}\geq \frac{\eps M_1^q}{j}.\ee
As a result, for any $j\geq 2$, 
\begin{align*}
M_j =M_1+\sum_{k=2}^{j}(M_k-M_{k-1}) &\geq M_{1}+\sum_{k=2}^{j}\frac{\eps M_1^q}{k}= M_1+\eps M_1^q\sum_{k=2}^{j}\frac1k,
\end{align*}
which implies (\ref{sup is inf}).

Due to (\ref{sup is inf}), there exists $N_1>N$ such that $M_j\geq 1$ for any $j\geq N_1$. Define 
\[\wt{q}=\min\{q, 2\}.\]
The reason for introducing this $\wt{q}$ is to apply the mean value theorem in (\ref{mvt}). For any $j\geq N_1$, it follows from (\ref{lower bdd for seq}) that 
\[j(M_{j}-M_{j-1})\geq \eps M_{j}^{q}\geq \eps M_{j}^{\wt{q}}.\]
Rearranging the above inequality leads to
\[M_{j-1}\leq M_{j}\bigg(1-\frac{\eps M_{j}^{\wt{q}-1}}{j}\bigg).\]
Raising both sides to the power $\wt{q}-1$ and multiplying by $\eps$ yields
\[\eps M_{j-1}^{\wt{q}-1}\leq \eps M_{j}^{\wt{q}-1}\bigg(1-\frac{\eps M_{j}^{\wt{q}-1}}{j}\bigg)^{\wt{q}-1}.\]
Define \[x_{j}=\eps M_{j}^{\wt{q}-1}.\]
Then for any $j\geq N_1$,
\be\label{ineq for x_j}
x_{j-1}\leq x_{j}\Big(1-\frac{x_{j}}{j}\Big)^{\wt{q}-1}. \ee
Since $1<\wt{q}\leq 2$, then by the mean value theorem,
\be\label{mvt}
1-\Big(1-\frac{x_j}{j}\Big)^{\wt{q}-1}\geq (\wt{q}-1)\frac{x_j}{j}.\ee
Hence, it follows from (\ref{ineq for x_j}) and (\ref{mvt}) that
\[x_{j-1}\leq x_{j}\Big[1-(\wt{q}-1)\,\frac{x_{j}}{j}\Big]=\frac{x_{j}[j-(\wt{q}-1)x_{j}]}{j}.\]
Taking reciprocal,
\begin{align}
\frac{1}{x_{j-1}} &\geq \frac{j}{x_{j}[j-(\wt{q}-1)x_{j}]} \nonumber\\
&=\frac{1}{x_{j}}+\frac{\wt{q}-1}{j-(\wt{q}-1)x_{j}}\nonumber\\
&\geq \frac{1}{x_{j}}+\frac{\wt{q}-1}{j}. \label{recuisive ineq}
\end{align}
Now for any $k>N_1$, we apply (\ref{recuisive ineq}) repeatedly from $j=N_1$ to $j=k$ and add all these inequalities together, then
\[\frac{1}{x_{N_1-1}}\geq \frac{1}{x_{k}}+(\wt{q}-1)\sum_{j=N_1}^{k}\frac{1}{j}.\]
Sending $k\rightarrow \infty$, the left hand side of the above inequality is a constant while the right hand side tends to infinity, which is a contradiction. Thus, (\ref{infimum is zero}) is verified.
\end{proof}
 
\begin{remark}
If we choose $M_{j}=\ln(j+1)$ for any $j\geq 1$, then for any $\eps>0$, 
\be\label{infimum is infty}
\liminf_{j\to \infty}\,\frac{j^{1+\eps}(M_{j}-M_{j-1})}{M_{j}^{q}}=\infty.\ee
Therefore, the result in Lemma \ref{Lemma, sharp grow rate, seq} is sharp in the sense that the term $j$ on the numerator in (\ref{infimum is zero}) can not be improved to any higher power $j^{1+\eps}$ with $\eps>0$.
\end{remark}

\bigskip\bigskip

{\small
\bibliographystyle{plain}
\bibliography{Ref-Parabolic}

\begin{thebibliography}{10}

\bibitem{AD17}
J.~R. Anderson and K.~Deng.
\newblock A lower bound on the blow up time for solutions of a chemotaxis
  system with nonlinear chemotactic sensitivity.
\newblock {\em Nonlinear Anal.}, 159:2--9, 2017.

\bibitem{BS14}
A.~Bao and X.~Song.
\newblock Bounds for the blowup time of the solutions to quasi-linear parabolic
  problems.
\newblock {\em Z. Angew. Math. Phys.}, 65(1):115--123, 2014.

\bibitem{DL00}
K.~Deng and H.~A. Levine.
\newblock The role of critical exponents in blow-up theorems: the sequel.
\newblock {\em J. Math. Anal. Appl.}, 243(1):85--126, 2000.

\bibitem{DS16}
J.~Ding and X.~Shen.
\newblock Blow-up in {$p$}-{L}aplacian heat equations with nonlinear boundary
  conditions.
\newblock {\em Z. Angew. Math. Phys.}, 67(5):Art. 125, 18, 2016.

\bibitem{Dre40}
F.~G. Dressel.
\newblock The fundamental solution of the parabolic equation.
\newblock {\em Duke Math. J.}, 7:186--203, 1940.

\bibitem{Ena11}
C.~Enache.
\newblock Blow-up phenomena for a class of quasilinear parabolic problems under
  {R}obin boundary condition.
\newblock {\em Appl. Math. Lett.}, 24(3):288--292, 2011.

\bibitem{Fel37}
W.~Feller.
\newblock Zur {T}heorie der stochastischen {P}rozesse.
\newblock {\em Math. Ann.}, 113(1):113--160, 1937.

\bibitem{Fri64}
A.~Friedman.
\newblock {\em Partial differential equations of parabolic type}.
\newblock Prentice-Hall, Inc., Englewood Cliffs, N.J., 1964.

\bibitem{Fuj66}
H.~Fujita.
\newblock On the blowing up of solutions of the {C}auchy problem for
  {$u_{t}=\Delta u+u^{1+\alpha }$}.
\newblock {\em J. Fac. Sci. Univ. Tokyo Sect. I}, 13:109--124, 1966.

\bibitem{Hu11}
B.~Hu.
\newblock {\em Blow-up theories for semilinear parabolic equations}, volume
  2018 of {\em Lecture Notes in Mathematics}.
\newblock Springer, Heidelberg, 2011.

\bibitem{HY94}
B.~Hu and H.-M. Yin.
\newblock The profile near blowup time for solution of the heat equation with a
  nonlinear boundary condition.
\newblock {\em Trans. Amer. Math. Soc.}, 346(1):117--135, 1994.

\bibitem{Ito53}
S.~Ito.
\newblock The fundamental solution of the parabolic equation in a
  differentiable manifold.
\newblock {\em Osaka Math. J.}, 5:75--92, 1953.

\bibitem{Ito54}
S.~Ito.
\newblock The fundamental solution of the parabolic equation in a
  differentiable manifold. {II}.
\newblock {\em Osaka Math. J.}, 6:167--185, 1954.

\bibitem{Ito57a}
S.~Ito.
\newblock A boundary value problem of partial differential equations of
  parabolic type.
\newblock {\em Duke Math. J.}, 24(3):299--312, 1957.

\bibitem{Ito57b}
S.~Ito.
\newblock Fundamental solutions of parabolic differential equations and
  boundary value problems.
\newblock {\em Jap. J. Math.}, 27:55--102, 1957.

\bibitem{Kap63}
S.~Kaplan.
\newblock On the growth of solutions of quasi-linear parabolic equations.
\newblock {\em Comm. Pure Appl. Math.}, 16:305--330, 1963.

\bibitem{LSU68}
O.~A. Lady\v{z}enskaja, V.~A. Solonnikov, and N.~N. Ural'ceva.
\newblock {\em Linear and quasilinear equations of parabolic type}.
\newblock Translated from the Russian by S. Smith. Translations of Mathematical
  Monographs, Vol. 23. American Mathematical Society, Providence, R.I., 1968.

\bibitem{Lev75}
H.~A. Levine.
\newblock Nonexistence of global weak solutions to some properly and improperly
  posed problems of mathematical physics: the method of unbounded {F}ourier
  coefficients.
\newblock {\em Math. Ann.}, 214(3):205--220, 1975.

\bibitem{Lev90}
H.~A. Levine.
\newblock The role of critical exponents in blowup theorems.
\newblock {\em SIAM Rev.}, 32(2):262--288, 1990.

\bibitem{LP74}
H.~A. Levine and L.~E. Payne.
\newblock Nonexistence theorems for the heat equation with nonlinear boundary
  conditions and for the porous medium equation backward in time.
\newblock {\em J. Differential Equations}, 16:319--334, 1974.

\bibitem{LL12}
F.~Li and J.~Li.
\newblock Global existence and blow-up phenomena for nonlinear divergence form
  parabolic equations with inhomogeneous {N}eumann boundary conditions.
\newblock {\em J. Math. Anal. Appl.}, 385(2):1005--1014, 2012.

\bibitem{Lie96}
G.~M. Lieberman.
\newblock {\em Second order parabolic differential equations}.
\newblock World Scientific Publishing Co., Inc., River Edge, NJ, 1996.

\bibitem{L-GMW91}
J.~L{\'o}pez-G{\'o}mez, V.~M{\'a}rquez, and N.~Wolanski.
\newblock Blow up results and localization of blow up points for the heat
  equation with a nonlinear boundary condition.
\newblock {\em J. Differential Equations}, 92(2):384--401, 1991.

\bibitem{NY19}
T.~Nishino and T.~Yokota.
\newblock Effect of nonlinear diffusion on a lower bound for the blow-up time
  in a fully parabolic chemotaxis system.
\newblock {\em J. Math. Anal. Appl.}, 479(1):1078--1098, 2019.

\bibitem{PP13}
L.~E. Payne and G.~A. Philippin.
\newblock Blow-up phenomena in parabolic problems with time dependent
  coefficients under {D}irichlet boundary conditions.
\newblock {\em Proc. Amer. Math. Soc.}, 141(7):2309--2318, 2013.

\bibitem{PPV-P10a}
L.~E. Payne, G.~A. Philippin, and S.~Vernier~Piro.
\newblock Blow-up phenomena for a semilinear heat equation with nonlinear
  boundary condition, {I}.
\newblock {\em Z. Angew. Math. Phys.}, 61(6):999--1007, 2010.

\bibitem{PPV-P10b}
L.~E. Payne, G.~A. Philippin, and S.~Vernier~Piro.
\newblock Blow-up phenomena for a semilinear heat equation with nonlinear
  boundary condition, {II}.
\newblock {\em Nonlinear Anal.}, 73(4):971--978, 2010.

\bibitem{PS06}
L.~E. Payne and P.~W. Schaefer.
\newblock Lower bounds for blow-up time in parabolic problems under {N}eumann
  conditions.
\newblock {\em Appl. Anal.}, 85(10):1301--1311, 2006.

\bibitem{PS07}
L.~E. Payne and P.~W. Schaefer.
\newblock Lower bounds for blow-up time in parabolic problems under {D}irichlet
  conditions.
\newblock {\em J. Math. Anal. Appl.}, 328(2):1196--1205, 2007.

\bibitem{PS09}
L.~E. Payne and P.~W. Schaefer.
\newblock Bounds for blow-up time for the heat equation under nonlinear
  boundary conditions.
\newblock {\em Proc. Roy. Soc. Edinburgh Sect. A}, 139(6):1289--1296, 2009.

\bibitem{QS07}
P.~Quittner and P.~Souplet.
\newblock {\em Superlinear parabolic problems. Blow-up, global existence and
  steady states.}
\newblock Birkh\"auser Advanced Texts: Basler Lehrb\"ucher. [Birkh\"auser
  Advanced Texts: Basel Textbooks]. Birkh\"auser Verlag, Basel, 2007.

\bibitem{RR97}
D.~F. Rial and J.~D. Rossi.
\newblock Blow-up results and localization of blow-up points in an
  {$N$}-dimensional smooth domain.
\newblock {\em Duke Math. J.}, 88(2):391--405, 1997.

\bibitem{TV-P16}
Y.~Tao and S.~Vernier~Piro.
\newblock Explicit lower bound of blow-up time in a fully parabolic chemotaxis
  system with nonlinear cross-diffusion.
\newblock {\em J. Math. Anal. Appl.}, 436(1):16--28, 2016.

\bibitem{Wal75}
W.~Walter.
\newblock On existence and nonexistence in the large of solutions of parabolic
  differential equations with a nonlinear boundary condition.
\newblock {\em SIAM J. Math. Anal.}, 6:85--90, 1975.

\bibitem{YZ16}
X.~Yang and Z.~Zhou.
\newblock Blow-up problems for the heat equation with a local nonlinear
  {N}eumann boundary condition.
\newblock {\em J. Differential Equations}, 261(5):2738--2783, 2016.

\bibitem{YZ18}
X.~Yang and Z.~Zhou.
\newblock Improvements on lower bounds for the blow-up time under local
  nonlinear {N}eumann conditions.
\newblock {\em J. Differential Equations}, 265(3):830--862, 2018.

\bibitem{YZ19}
X.~Yang and Z.~Zhou.
\newblock Lifespan estimates via {N}eumann heat kernel.
\newblock {\em Z. Angew. Math. Phys.}, 70(1):Art. 30, 2019.

\end{thebibliography}
}

\bigskip

\thanks{(X. Yang) Department of Mathematics, Virginia Polytechnic Institute and State University, Blacksburg, VA 24061, USA.}

\thanks{Email: xinyangmath@vt.edu}

\bigskip

\thanks{(Z. Zhou) Department of Mathematics, Michigan State University, East Lansing, MI 48824, USA.} 

\thanks{Email: zfzhou@math.msu.edu}
\end{document}